\documentclass[11pt]{amsart}

\usepackage{txfonts}
\newcommand{\nonarc}{\multimapinv}
\usepackage{lmodern}
\usepackage{amscd,amssymb,graphicx,url}
\usepackage{color}
\usepackage{soul} 

\begin{document}

\newtheorem{thm}{Theorem}
\newtheorem*{thm*}{Theorem}
\newtheorem{cor}[thm]{Corollary}
\newtheorem{lemma}[thm]{Lemma}
\newtheorem{prop}[thm]{Proposition}

\theoremstyle{definition}
\newtheorem{quest}[thm]{Question}
\newtheorem{ex}[thm]{Example}

\newcommand{\linnum}{\stepcounter{thm}\tag{\thethm}}

\newcommand{\bN}{\mathbb N}
\newcommand{\bR}{\mathbb R}
\newcommand{\cA}{\mathcal{A}}
\newcommand{\cB}{\mathcal{B}}
\newcommand{\cC}{\mathcal{C}}
\newcommand{\cD}{\mathcal{D}}
\newcommand{\cE}{\mathcal{E}}
\newcommand{\cF}{\mathcal{F}}
\newcommand{\cJ}{\mathcal{J}}
\newcommand{\cM}{\mathcal{M}}
\newcommand{\cN}{\mathcal{N}}
\newcommand{\cO}{\mathcal{O}}
\newcommand{\cR}{\mathcal{R}}
\newcommand{\cS}{\mathcal{S}}
\newcommand{\cU}{\mathcal{U}}
\newcommand{\cV}{\mathcal{V}}
\newcommand{\cW}{\mathcal{W}}
\newcommand{\cY}{\mathcal{Y}}
\newcommand{\tx}{\tilde{x}}
\newcommand{\tv}{\tilde{v}}
\newcommand{\wtD}{{\widetilde{D}}}
\newcommand{\wtgamma}{\widetilde{\gamma}}
\newcommand{\wtphi}{\widetilde{\phi}}
\newcommand{\wtU}{{\widetilde{U}}}
\newcommand{\wtV}{{\widetilde{V}}}
\newcommand{\za}{\alpha}
\newcommand{\zb}{\beta}
\newcommand{\zf}{\phi}
\newcommand{\zF}{\Phi}
\newcommand{\zg}{\gamma}
\newcommand{\zd}{\delta}
\newcommand{\ze}{\epsilon}
\newcommand{\zh}{\eta}
\newcommand{\zj}{\psi}
\newcommand{\zl}{\lambda}
\newcommand{\zm}{\mu}
\newcommand{\zn}{\nu}
\newcommand{\zp}{\pi}
\newcommand{\zq}{\theta}
\newcommand{\zr}{\rho}
\newcommand{\zs}{\sigma}
\newcommand{\zS}{\Sigma}
\newcommand{\zt}{\tau}
\newcommand{\zv}{\varphi}
\newcommand{\zw}{\omega}
\newcommand{\zD}{\Delta}
\newcommand{\zJ}{\Psi}
\newcommand{\zG}{\Gamma}
\newcommand{\zL}{\Lambda}
\newcommand{\lcm}{\mbox{lcm}}
\newcommand{\co}{\colon\thinspace}
\newcommand{\SR}{S_{\mathcal{R}}}
\newcommand{\SRp}{S_{\mathcal{R'}}}
\newcommand{\expm}{\varphi}
\newcommand{\subm}{\sigma_{\mathcal{R}}}
\newcommand{\defn}{\emph}
\newcommand{\Chat}{\widehat{\mathbb C}}
\newcommand{\Otilde}{\widetilde{\mathcal{O}}}
\newcommand{\bbC}{{\mathbb C}}
\newcommand{\bbD}{{\mathbb D}}
\newcommand{\bbR}{{\mathbb R}}

\newcommand{\gap}{\vspace{5pt}}      
\newcommand{\mtwo}[4]                
{\mbox{$\left(\begin{array}{cc}      
#1 & #2 \\
#3 & #4
\end{array}
\right)$}}

\newcommand{\kmp}[1]{\sf \color{blue} #1 \rm \color{black}}
\newcommand{\nonslope}{\odot}
\newcommand{\pullback}{\stackrel{\,f}{\leftarrow}}

\newcommand{\wrp}[1]{\textcolor{red}{\sf #1}}

\newcommand{\pf}{\noindent {\bf Proof: }}

\newcommand{\id}{\mbox{\rm id}}
\newcommand{\interior}{\mbox{int}}   
\newcommand{\diam}{\mbox{\rm diam}}
\newcommand{\dist}{\mbox{\rm dist}}
\newcommand{\per}{\mbox{\rm per}}
\newcommand{\qedspecial}[1]{\nopagebreak \begin{flushright}
        \rule{2mm}{2.5mm}{\bf #1} \end{flushright}}

\newcommand{\nosubsections}{\renewcommand{\thethm}{\thesection.\arabic{thm}}
           \setcounter{thm}{0}}

\newcommand{\bdry}{\partial}

\title{Expansion properties for finite subdivision rules II}

\author{William Floyd}
\address{Department of Mathematics\\ Virginia Tech\\
Blacksburg, VA 24061\\ U.S.A.}
\email{floyd@math.vt.edu}
\urladdr{http://www.math.vt.edu/people/floyd}

\author{Walter Parry}
\address{Department of Mathematics and Statistics\\ 
Eastern Michigan University\\
Ypsilanti, MI 48197\\ U.S.A.}
\email{walter.parry@emich.edu}

\author{Kevin M. Pilgrim}
\address{Department of Mathematics\\ Indiana University\\
Bloomington, IN 47405\\ U.S.A.}
\email{pilgrim@indiana.edu}

\keywords{finite subdivision rule, expanding map, postcritically
finite, Thurston map}
\subjclass[2000]{Primary 37F10, 52C20; Secondary 57M12}
\date\today

\begin{abstract} We prove that every sufficiently large iterate of a
Thurston map which is not doubly covered by a torus endomorphism and
which does not have a Levy cycle is isotopic to the subdivision map of
a finite subdivision rule.  We determine which Thurston maps doubly
covered by a torus endomorphism have iterates that are isotopic to 
subdivision maps of finite subdivision rules.  
We give conditions under which no iterate
of a given Thurston map is isotopic to the subdivision map of a finite
subdivision rule.
\end{abstract}
\maketitle

\section{Introduction}\label{sec:intro}

This paper continues our study of expansion properties for finite
subdivision rules begun in \cite{exppropi}.  In \cite{exppropi} we
usually began with a Thurston map which is the subdivision map of a
finite subdivision rule.  Here we begin with a general Thurston map,
and we ask whether it is isotopic to the subdivision map of a finite
subdivision rule. 

An answer to the question of which Thurston maps are isotopic to
subdivision maps of finite subdivision rules remains out of reach.  A
much more tractable problem is to determine which Thurston maps have
\emph{iterates} which are isotopic to subdivision maps of finite
subdivision rules.  Passage to an iterate does not materially affect
dynamics, so in terms of dynamics, passage to an iterate is not very
restrictive.

Our main result, Theorem~\ref{thm:main}, is that every sufficiently
large iterate of a Thurston map which is not doubly covered by a torus
endomorphism and which does not have a Levy cycle is isotopic to the
subdivision map of a finite subdivision rule.  A key ingredient in our
proof of this is a result \cite[Theorem C]{bd0} of Bartholdi and
Dudko, which implies that every Thurston map which is not doubly
covered by a torus endomorphism and which does not have a Levy cycle
is B\"{o}ttcher expanding.  In our proof of Theorem~\ref{thm:main}, 
what we really use is the fact that the
map is B\"{o}ttcher expanding.  In Theorem~\ref{thm:euclid} we handle
those Thurston maps which are doubly covered by torus endomorphisms.
So the question for iterates is answered except for Thurston maps
which are not covered by torus endomorphisms and which have Levy
cycles.  Of course, many of these are subdivision maps of finite
subdivision rules.  However, in Theorem~\ref{thm:cnds} we present a
condition on such a Thurston map $f$ which implies that no iterate of
$f$ is isotopic to the subdivision map of a finite subdivision rule.
Section~\ref{sec:nofsr} concludes with an example of such a Thurston
map.

We conclude this introduction with a brief history of this problem. There are two sources of motivation: one from geometry, another from dynamics.  On the geometric side, finite subdivision rules were introduced in \cite{fsr} as
simplifications of patterns (informally described as 'local replacement rules' or 'finite replacement rules') observed when studying the action of cocompact Kleinian groups on the Riemann sphere;  see e.g. \cite[\S 3]{cfp:shape}. 
On the dynamical side, given a topological dynamical system, it is natural to ask whether there are Markov partitions with good topological properties.  Thus
the problem naturally arises to determine which
Thurston maps are subdivision maps of finite subdivision rules.
Independent solutions of this problem for all sufficiently large
iterates of postcritically finite rational maps with no periodic
critical points (they all are) were obtained almost simultaneously by
Bonk-Meyer and Cannon-Floyd-Parry.  The latter solution appeared as
Theorem 1 in \cite{subrat}.  The former solution appeared 
as Corollary 15.2 in \cite{BM}, where the result was generalized to
expanding Thurston maps.  The main result of \cite{lattes} is that
almost every Latt\`{e}s rational map is the subdivision map of a
finite subdivision rule with one tile type of a very special form.  In
\cite[Theorem 1.2]{ghmz} Gao, Ha\"issinsky, Meyer and Zeng prove that
every sufficiently large iterate of a postcritically finite rational
map whose Julia set is a Sierpi\'{n}ski carpet is the subdivision map
of a finite subdivision rule.  More recently Cui, Gao, and Zeng \cite{cgz2019} proved that any critically finite rational map 
has an iterate that preserves a finite connected graph containing the postcritical set, and thus is the subdivision map of a finite subdivision rule. Thus although the present work applies
to more Thurston maps, its conclusion is weaker than those above
because its conclusion only gives that the iterate is isotopic to a map which is a subdivision map.
Thus it would be interesting to know if the techniques of \cite{cgz2019} can be adapted to B\"ottcher expanding maps.

\section{Realizability of B\"{o}ttcher expanding maps}
\label{sec:bottcher}\nosubsections

The goal of this section is to prove the main theorem,
Theorem~\ref{thm:main}.  To prepare for Theorem~\ref{thm:main}, we
will fix some notation, make some definitions and prove four lemmas.

The notation and definitions which we introduce now will hold
throughout this section.  As in Theorem~\ref{thm:main}, let $f\co
S^2\to{S^2}$ be a Thurston map which is not doubly covered by a torus
endomorphism and which does not have a Levy cycle.  

\subsection*{B\"ottcher expanding maps.}  Let $P_f$ be the postcritical set 
of $f$, and let $P_f^\infty $ be the set of periodic points in $P_f$ whose 
cycles contain a critical point.  Following  \cite[Definition 4.1]{bdiv} we 
say that $f$ is \emph{B\"ottcher expanding} if (i) there exists a complete 
length metric on $S^2-P_f^\infty$  which is expanded by $f$, in the sense 
that for every nonconstant rectifiable compact curve $\gamma: [0,1] \to 
S^2-P_f^\infty$, the length of any lift of $\gamma$ under $f$ is strictly 
less than the length of $\gamma$, and (ii) for each $p \in P_f^\infty$, 
if $n$ is the least period of $p$, then the first-return map $f^n$, near $p$, 
is locally holomorphically conjugate to $z \mapsto z^k$ near the origin, 
where $k=\deg(f^n, p)$.  Any two different choices of such local holomorphic 
coordinates differ by a multiplicative constant which is a $(k-1)$st root of 
unity. 

For example, if $f$ is rational, then according to \cite{DH} there is
a canonically associated orbifold structure on $S^2$ whose singular
points are precisely the elements of $P_f$, with elements of
$P_f^\infty$ having weight infinity, so that these points correspond to
punctures.  The constant curvature metric on the orbifold universal
cover descends to a metric which is then expanded by $f$; see
\cite[Appendix A]{Mc}.

Our point of departure for our subsequent analysis is the following result.
As usual, the case of maps $f$ whose orbifold $\cO_f$ is Euclidean (parabolic) require special treatment. 
Among these, those with three or fewer postcritical points are isotopic to rational maps, by Thurston's characterization \cite{DH}, and these are B\"ottcher expanding. The remaining types -- those whose orbifolds have signature $(2,2,2,2)$ -- are the exceptions in our setting. Any such map $f$ is isotopic through maps agreeing on $P_f$ to a map $g$ which is affine in the natural Euclidean orbifold structure, and which lifts under the natural orbifold double-cover to an affine map of a torus induced by an affine map of the plane of the form $x \mapsto A_fx + b$.

The characterization in case 1 below is a deep result of Bartholdi and Dudko,  announced in \cite[Theorem C]{bd0} and proved in \cite[Theorem A=Theorem 4.4]{bdiv}.

\begin{thm}[Expanding conditions]
\label{thm:expanding_conditions}
Suppose $f$ is a Thurston map.
\begin{enumerate}
\item If $\cO_f$ does not have signature $(2,2,2,2)$, then $f$ is isotopic to a smooth B\"ottcher expanding map $g$ if
and only if it has no Levy cycles.
\item If $\cO_f$ has signature $(2,2,2,2)$, then $f$ is isotopic to a smooth B\"ottcher expanding map $g$  if and only if the eigenvalues of $A_f$ lie outside the closed unit disk. 
\end{enumerate}
\end{thm}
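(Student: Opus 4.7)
Since case~1 is the Bartholdi--Dudko theorem cited from \cite{bdiv}, the work lies entirely in case~2, the signature $(2,2,2,2)$ setting. A preliminary observation: every point of $P_f$ has finite orbifold weight $2$ in this signature, so no postcritical periodic cycle contains a critical point; hence $P_f^\infty = \emptyset$ and condition~(ii) in the definition of B\"ottcher expansion is vacuous. The problem reduces to deciding, in terms of $A_f$, when some smooth map isotopic to $f$ rel $P_f$ admits a complete length metric on the pillowcase $S^2$ which it strictly expands.

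For the backward implication, I take $g$ to be the affine representative of $f$, which lifts under the orbifold double cover $T \to S^2$ to the affine map $\tilde g(x) = A_f x + b$. Under the hypothesis that both eigenvalues of $A_f$ lie outside the closed unit disk, $A_f$ strictly expands some inner-product norm on $\bR^2$ by a uniform factor $c > 1$. The associated flat metric on $\bR^2$ descends first to $T$ and then, via the quotient by the involution $z \mapsto -z$, to a Euclidean cone metric on the pillowcase $S^2$. This metric is complete by compactness, $g$ is smooth, and $g$ strictly expands the length of every rectifiable curve by the factor $c$. Hence $g$ is B\"ottcher expanding.

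For the forward implication, assume $f$ is isotopic rel $P_f$ to a B\"ottcher expanding $g$, and suppose for contradiction that some eigenvalue $\lambda$ of $A_f$ satisfies $|\lambda| \le 1$. Because $\det A_f = \deg f \ge 2$, a complex conjugate pair of eigenvalues would force each modulus to equal $\sqrt{\det A_f} > 1$, so both eigenvalues are real and the other, $\mu$, satisfies $|\mu| = \det A_f / |\lambda| \ge 2$. Let $v$ be a real $\lambda$-eigenvector. When the slope of $v$ is rational, $\lambda$ is a rational algebraic integer, hence an integer, so $|\lambda| = 1$ and $|\mu| = \det A_f$; the torus geodesic in direction $v$ at a generic height descends to an essential simple closed curve $\gamma \subset S^2 - P_f$ avoiding the cone points, and a direct computation shows that $\{\gamma\}$ is an invariant multicurve for the affine representative with leading Thurston eigenvalue $|\mu|/|\lambda| = \det A_f \ge 2$. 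This Thurston obstruction is an isotopy invariant of $f$ rel $P_f$ but is incompatible with $g$ being B\"ottcher expanding, yielding a contradiction.

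The principal obstacle is the irrational-slope case, in which $\lambda$ is an irrational real algebraic integer with $|\lambda| < 1$. No simple closed curve on $S^2 - P_f$ has the exact slope of $v$, and no curve of nearby rational slope yields an $f$-invariant multicurve, so the combinatorial argument above does not apply verbatim. The plan is instead metric: in the pillowcase Euclidean metric, the components of the $n$-th pullback of any fixed rational-slope curve $\gamma_0$ under the affine representative are geodesic representatives in their free homotopy classes with lengths growing like $|\lambda|^{-n} \to \infty$. In any metric bi-Lipschitz equivalent to the pillowcase Euclidean metric on $S^2$ --- and the length metric expanded by $g$ is such a metric, by smoothness and the absence of punctures --- these geodesic lengths also grow without bound. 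But B\"ottcher expansion forces the lengths of the specific $g$-pullbacks of $\gamma_0$ to stay bounded by $\ell(\gamma_0)$, contradicting the lower bound on their homotopy-class lengths.
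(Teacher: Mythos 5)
The split into cases is the same as the paper's, and your sufficiency argument (flat cone metric on the pillowcase from an adapted norm for $A_f$) matches the spirit of the paper's one-word ``Sufficiency is clear.'' Your rational-slope treatment of necessity, where $\lambda=\pm 1$ forces an invariant multicurve, is also essentially equivalent to what the paper intends. The difficulty is in your irrational-slope case, which is the genuine content; here your argument has two gaps, both of which the paper's argument avoids by working with \emph{arcs} (a segment in the eigenspace) rather than closed curves.

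First, you assert that the length metric expanded by $g$ is bi-Lipschitz to the pillowcase Euclidean metric ``by smoothness and the absence of punctures.'' The definition of B\"ottcher expanding used in the paper only requires a complete length metric on $S^2-P_f^\infty=S^2$; it is not required to be smooth, Riemannian, or bi-Lipschitz to anything. Smoothness of the map $g$ gives you nothing about the regularity of the metric it expands. What you can get cheaply is a large-scale comparison: the lifted metric $\widetilde\mu$ on $\bbR^2$ is a $\zG$-invariant complete length metric with cocompact isometry group, hence quasi-isometric (Milnor--\v{S}varc) to the Euclidean metric, and that already suffices to show that curves with Euclidean length $\to\infty$ also have $\widetilde\mu$-length $\to\infty$. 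But bi-Lipschitz equivalence as stated is unjustified.

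Second, and more seriously, you claim that ``B\"ottcher expansion forces the lengths of the specific $g$-pullbacks of $\gamma_0$ to stay bounded by $\ell(\gamma_0)$.'' This is only true when each pullback component maps to its image with degree $1$. For a closed curve on the $(2,2,2,2)$-pillowcase, the pullback components of a circle of rational slope $w$ are circles of slope $w'$ with $A w' = c w$ and degree $|c|$, and $|c|$ is typically $>1$; for example, with $A=\left[\begin{smallmatrix}3&1\\1&1\end{smallmatrix}\right]$ (eigenvalues $2\pm\sqrt 2$) the pullback of the horizontal circle is a single circle of degree $2$. When the degree is $d>1$, the B\"ottcher-expanding condition only gives $\ell_\mu(\gamma_{n})< d\,\ell_\mu(\gamma_{n-1})$, so your boundedness claim fails. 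By contrast, a lift of an \emph{arc} under a branched covering is always single-valued, so $\ell_\mu$ strictly decreases (and, by uniform expansion on the compact sphere, decreases geometrically). That is exactly why the paper's parenthetical proof phrases the obstruction in terms of ``a curve given by a segment in the corresponding eigenspace'': the $F^{-n}$-preimages of such a segment through the fixed point have Euclidean length $|\lambda|^{-n}\ell$ (nondecreasing, strictly increasing if $|\lambda|<1$), their $\pi$-images are genuine lifts whose $\mu$-length must tend to $0$, and the quasi-isometry from the previous paragraph (for $|\lambda|<1$) or the $F$-invariance of the segment (for $|\lambda|=1$, after replacing $f$ by $f^2$ to handle $\lambda=-1$) gives the contradiction. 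You should rewrite the irrational case along these lines.
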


The characterization in case 2 is elementary.  (Sufficiency is clear. To prove necessity, note that an eigenvalue inside the closed unit disk must be real; a curve given by a segment in the corresponding eigenspace and containing a fixed-point will have inverse images which do not get shorter under backward iteration, and this is an obstruction to the existence of some length metric which is contracted under taking preimages.)

So in the remainder of this section we may, and do, assume that $f$ itself is smooth and B\"{o}ttcher expanding.  

We will need uniform expansion. To accomplish this, we will excise
open forward-invariant neighborhoods, round-shaped in the natural
conjugating coordinates, of points in $P_f^\infty$ to obtain a compact
planar subset $\cM$ with the property that its preimage $f^{-1}(\cM)$ is
contained in the interior of $\cM$. From compactness and expansion it
then follows that $f$ is uniformly expanding on $\cM$ in the sense
that for some $0<\rho<1$ and for every nonconstant rectifiable compact
curve $\gamma: [0,1] \to \cM$, the length of any lift of $\gamma$
under $f$ is at most $\rho$ times the length of $\gamma$.

\subsection*{Fatou and Julia sets} 

Suppose $f$ is a B\"ottcher expanding smooth Thurston map, and $\cM$
is the complement of neighborhoods of the attractors as in the
previous subsection.  We recall here some facts from \cite[Section
4.2]{bdiv}.

The \emph{Julia set} $\cJ_f$ is the closure of the set of repelling periodic 
points; a periodic point is \emph{repelling} if there is no neighborhood 
$U\ne S^2$ with $f^k(U)$ compactly contained in $U$ for some $k>0$.  
The \emph{Fatou set} $\cF_f$  is the set of points at which the map 
$S^2\ni z \mapsto (z, f(z), f^2(z), \ldots) \in (S^2)^\infty$ is continuous. 
By \cite[Lemma 4.6]{bdiv}, $S^2=\cJ_f \sqcup \cF_f$, and an equivalent 
formulation of $\cJ_f$ is that it coincides with the set of points which 
do not converge under iteration to the attracting cycles in $P_f^\infty$, 
i.e. with the set of points which never escape $\cM$. 

\noindent{\bf Notation.}  Let $\bbD$ be the open unit disk in $\bbC$.
We use an overline to denote the topological closure of a set.  So
$\overline{\bbD}$ is the closed unit disk. An \emph{arc} is the image
of a closed unit interval under a homeomorphism; arcs are always
closed.  Let $d\co S^2\times S^2\to \bbR$ be a spherical metric.  All
distances considered between points in $S^2$ are relative to $d$.

For each $p\in P_f^\infty$, the immediate basin of attraction  $F_p$ of
$p$ is the component of the Fatou set that contains $p$. 
Then for each $p\in P_f^\infty$, there is a homeomorphism 
$\zj_p\co \bbD\to F_p$ with the following property.  If $p\in P_f^\infty$, 
$q=f(p)$ and $k$ is the local degree of $f$ at $p$, then 
$\zj_q^{-1}(f(\zj_p(z)))=z^k$ for every $z\in \bbD$. Moreover,
the Douady-Hubbard proof \cite[Section 3.5]{DH2} 
of the local connectivity of the filled Julia set of a
subhyperbolic polynomial carries over in this setting and so each $\zj_p$ 
extends to a continuous map $\zj_p\co \overline{\bbD}\to \overline{F}_p$.
For more information, see the discussion before Lemma 4.7 in \cite{bdiv}
or Theorem 4.14 of Milnor's paper \cite{m2}.

The images in $\overline{F}_p$ of radii in $\overline{\bbD}$ are
called \emph{rays}, and the images of circles in $\bbD$ concentric
about 0 are called \emph{equipotentials}; these are independent of the
choice of local holomorphic coordinates. The ray $\{\zj_p(r e^{i
\zq}):0\le r\le 1\}$ with angle $\zq$ is said to \emph{land} at
$\zj_p(e^{i \zq})$.  For every nonnegative integer $n$, the connected
components of $f^{-n}(F_p)$ are called \emph{Fatou components}.  Just
as for rational Thurston maps which are B\"ottcher expanding, the
Julia set of $f$ is connected, and each Fatou component is
homeomorphic to a disk and is eventually periodic.  Because $f$ is
B\"{o}ttcher expanding, there exists a finite cover of $S^2\setminus
\bigcup_{p\in P_f^\infty}F_p$ by open sets $U_i$ such that the
diameters of the connected components of the sets $f^{-n}(U_i)$ tend
to 0 as $n\to \infty $.

\subsection*{Proof of Theorem \ref{thm:main}}

The initial groundwork for the proof of Theorem~\ref{thm:main} is
complete.  Now we continue with four lemmas.  The first lemma is a
result in combinatorial topology about connecting vertices in a tiling
of the disk. The next three lemmas are finiteness results about points
in the boundary of an immediate basin of attraction.  We emphasize
that the above notation and definitions remain in force throughout
this section.

The first lemma is essentially Lemma 4 in \cite{subrat}, though in
\cite{subrat} we assume that the CW complex gives a regular tiling. In
the setting of a CW complex with underlying space a surface, the
closed $2$-cells are called \emph{tiles}.  The assumption in the second sentence is nontrivial, since open $2$-cells need not be Jordan domains, and so a tile might contain a vertex in its interior.

\begin{lemma}\label{lemma:tileddisk} Let $X$ be a closed topological
disk with the structure of a CW complex such that every tile of $X$ is
a closed topological disk.  Let $u_1$, $u_2$ and $v$ be any triple of distinct
vertices of $X$ such that each is in the boundary of a tile of $X$.
Then there is an arc in the 1-skeleton of $X$ which has initial point
$u_1$, terminal point $u_2$ and contains $v$.
\end{lemma}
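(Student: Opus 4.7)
My plan is to reduce the lemma to a classical graph-theoretic statement. I will first establish that the 1-skeleton $G := X^{(1)}$, viewed as an abstract graph, is 2-vertex-connected, and then extract the desired arc from a cycle in an auxiliary graph.

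To show $G$ has no cut vertex, fix a vertex $w$ of $G$ and let $T_{1},\dots,T_{m}$ be the tiles containing $w$ on their boundary. Since $X$ is a closed topological disk and each $T_{i}$ is a closed topological disk, the link of $w$ in $X$ is a circle (an arc, if $w \in \partial X$), and the $T_{i}$ are arranged cyclically (respectively linearly) around $w$ so that consecutive tiles $T_{i},T_{i+1}$ share an edge $e_{i}$ incident to $w$. The Jordan curve $\partial T_{i}$, with $w$ removed, is a simple arc in $G\setminus\{w\}$ joining the other endpoints $q_{i-1}, q_{i}$ of $e_{i-1}$ and $e_{i}$; concatenating these arcs shows that every vertex adjacent to $w$ lies in a single connected component of $G\setminus\{w\}$. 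Any remaining vertex $x \ne w$ admits a simple path in $G$ from $x$ to $w$ whose penultimate vertex is some $q_{j}$, so $x$ too lies in that component. Hence $G\setminus\{w\}$ is connected.

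Granting this 2-connectedness, I construct the arc as follows. Adjoin to $G$ a new vertex $u^{*}$ together with two edges $u^{*}u_{1}$ and $u^{*}u_{2}$ to form a graph $G'$. A short check using 2-connectedness of $G$ shows that $G'$ is itself 2-connected. By the classical theorem that any two vertices of a 2-connected graph lie on a common simple cycle (equivalently, by Menger's theorem, there are two internally disjoint paths between them), there is a simple cycle $C$ in $G'$ through both $u^{*}$ and $v$. Because $u^{*}$ has degree $2$ in $G'$, the cycle $C$ traverses both edges at $u^{*}$; cutting $C$ at $u^{*}$ produces a simple path in $G$ from $u_{1}$ to $u_{2}$ containing $v$, whose realization in the 1-skeleton is the arc required by the lemma.

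The main technical obstacle is the first step, in particular rigorously justifying the cyclic (or linear) arrangement of tiles around a vertex and handling the case $w \in \partial X$. The hypothesis that each of $u_{1},u_{2},v$ lies on the boundary of some tile is exactly what is needed to rule out isolated vertices and to ensure that the 2-connectedness argument applies to the vertices in question.
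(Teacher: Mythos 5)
Your reduction via the auxiliary vertex $u^{*}$ is a clean and valid way to convert ``two vertices lie on a common cycle'' into the conclusion you need, and it is a genuinely different route from the paper's (which proceeds by induction on the number of tiles, peeling off a boundary tile at each step). However, the first step of your argument---that the $1$-skeleton $G$ of $X$ has no cut vertex---is false under the stated hypotheses, and this is exactly the subtlety the paper flags with the remark that ``a tile might contain a vertex in its interior.''

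Here is a concrete counterexample to your claim. Let $X$ be a closed disk with vertices $v_{0},v_{1},v_{2}$ on $\partial X$ and a vertex $w$ in the interior; take as edges the three arcs of $\partial X$ between consecutive $v_{i}$'s together with one arc $c$ from $v_{0}$ to $w$. The complement of the $1$-skeleton is a slit open disk, hence a single open $2$-cell, so there is exactly one tile $T$, and $T=X$ is a closed topological disk; the hypotheses of the lemma are satisfied. But $v_{0}$ is a cut vertex of $G$: removing it isolates $w$. The breakdown in your argument is precisely at the sentence ``$\partial T_{i}$, with $w$ removed, is a simple arc in $G\setminus\{w\}$ joining the other endpoints $q_{i-1},q_{i}$ of $e_{i-1}$ and $e_{i}$.'' When an edge incident to the vertex under consideration lies in the \emph{interior} of a tile (as $c$ does here), its other endpoint does not lie on the boundary circle $\partial T_{i}$ at all, so the arcs you concatenate do not reach it; more generally, a single tile can contribute several wedges to the link of a vertex once interior edges are present, so the ``cyclic arrangement of tiles'' picture is not accurate. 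To salvage your approach you would need to work with the subgraph $G_{0}$ consisting of the union of the topological boundaries $\partial T$ over all tiles $T$ (the hypothesis on $u_{1},u_{2},v$ guarantees they are vertices of $G_{0}$), argue that $G_{0}$ is $2$-connected, and then run the $u^{*}$-cycle argument inside $G_{0}$. That appears plausible but requires a careful reworking of the local link analysis, and as written the proof does not establish it.
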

  \begin{proof} We prove Lemma~\ref{lemma:tileddisk} by induction of
the number of tiles in $X$.  If $X$ has only one tile, then $u_1$,
$u_2$ and $v$ are on the boundary of a polygon, and the theorem is
clear.  Suppose that $n$ is a positive integer and that the result
holds if $X$ has $n$ tiles.  Let $Y$ be a closed topological disk
which has the structure of a CW complex with $n+1$ tiles such that
every tile of $Y$ is a closed topological disk.  Let $u_1$, $u_2$, $v$
be distinct vertices of $Y$ such that each is in the boundary of a
tile of $Y$.  Because every tile of $Y$ is a closed topological disk,
$Y$ has distinct tiles $t_1$ and $t_2$ such that the closures of
$Y\setminus t_1$ and $Y\setminus t_2$ are topological disks.  Let $t$
be one of these tiles such that $v$ is in the closure $X$ of
$Y\setminus t$.  If $u_1,u_2\in X$, then we are done by induction.
Now suppose that $u_1\in X$ but $u_2\notin X$.  Let $w\ne u_1$ be one
of the two points of $t\cap X\cap \partial Y$.  By induction there is
an arc in $X$ with endpoints $u_1$ and $w$ which contains $v$.  This
arc can easily be extended to $u_2$.  The case in which $u_1\notin X$
and $u_2\in X$ is symmetric, and the case in which $u_1,u_2\notin X$
is similar.

\end{proof}

The second lemma shows that there cannot be infinitely many rays
landing at a preperiodic point in the boundary of an immediate basin
of attraction.

\begin{lemma}\label{lemma:land} Let $p\in P_f^\infty $, and let $q$ be
a preperiodic point in $\partial F_p$.  Then only finitely many rays
in $\overline{F}_p$ land at $q$.
\end{lemma}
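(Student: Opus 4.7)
The plan is to argue by contradiction using the B\"{o}ttcher expansion.  Suppose infinitely many rays in $\overline{F}_p$ land at $q$.  Replacing $f$ by a suitable iterate leaves the set of rays landing at $q$ unchanged, so we may assume both $p$ and $q$ are fixed by $f$; then via $\zj_p$, $f|_{F_p}$ is conjugate to $z\mapsto z^d$ with $d = \deg(f,p) \geq 2$.  Let $\Theta = \zj_p^{-1}(q) \cap \partial\bbD$ be the set of landing angles at $q$.  Continuity of the Carath\'eodory extension $\zj_p\co \overline{\bbD}\to \overline{F}_p$ shows that $\Theta$ is closed, and the relation $f\circ \zj_p = \zj_p\circ (z\mapsto z^d)$ together with $f(q)=q$ shows that $\Theta$ is forward-invariant under $\theta\mapsto d\theta$.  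By the contradiction hypothesis, $\Theta$ is also infinite, hence has an accumulation point $\theta^*\in\Theta$.

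Listing the elements of $\Theta$ cyclically, each adjacent pair $\theta_n,\theta_{n+1}\in\Theta$ gives a loop $\ell_n=\zj_p(\{e^{it}:\theta_n\leq t\leq\theta_{n+1}\})$ in $\partial F_p$ based at $q$.  After a further passage to a sub-collection one may assume each $\ell_n$ is a Jordan curve; it then bounds an open Jordan disk $\Omega_n\subset S^2\setminus\overline{F}_p$.  The $\Omega_n$ are pairwise disjoint, so spherical area gives $\sum_n \mathrm{Area}(\Omega_n)\leq 4\pi$; in particular, the spherical diameters of the $\Omega_n$ tend to zero.  For large $n$, $\Omega_n$ avoids the (finite) critical set of $f$, and $f|_{\Omega_n}$ is a homeomorphism whose image lies inside some $\Omega_{\sigma(n)}$; the B\"ottcher expansion then yields $\diam_\delta(\Omega_n)\leq\rho\cdot\diam_\delta(\Omega_{\sigma(n)})$ for the metric $\delta$ on the compact set $\cM$.

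Iterating $\sigma$, each orbit $n,\sigma(n),\sigma^2(n),\ldots$ must eventually reach a ``big'' component --- one of only finitely many that either meet the critical set or have $\delta$-diameter above a fixed threshold --- and does so in at most $\log(\diam_\delta\cM/\diam_\delta\Omega_n)/\log(1/\rho)$ steps.  The plan is then to count $\sigma$-preimages of the finite collection of big components (each step contributing at most $\deg f$ preimages) and combine this with the exponential depth bound above, together with the fact that $\Omega_n$'s of comparable diameter are pairwise-disjoint planar disks whose count is limited by area, to bound the total collection $\{\Omega_n\}$ and contradict the assumed infinitude of $\Theta$.

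The main obstacle is this final counting.  The crude bound $(\deg f)^k$ preimages at depth $k$ sums to infinity, so by itself it gives nothing; what must save the argument is the combination with the geometric diameter bound $\rho^k\diam_\delta\cM$, since disjoint planar disks of diameter at least $\epsilon$ are finitely many.  Verifying that these two bounds combine in the right way, and handling cleanly the finitely many ``bad'' indices where $\Omega_n$ does contain a critical point or is not a Jordan disk, is the technical heart of the argument.
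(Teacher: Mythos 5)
Your proposal diverges substantially from the paper's argument and contains a structural error that I do not see a way to repair within your framework.

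The central problem is the claimed inclusion $f(\Omega_n)\subseteq\Omega_{\sigma(n)}$. The map $\zj_p$ conjugates $f|_{F_p}$ to $z\mapsto z^d$, so if $\theta_n,\theta_{n+1}$ are \emph{adjacent} angles in $\Theta$, their images $d\theta_n, d\theta_{n+1}$ lie in $\Theta$ but are in general \emph{not} adjacent: forward invariance of $\Theta$ says nothing about forward invariance of the set of gaps. Consequently $f(\Omega_n)$ is a region cut off by the image arc $\zj_p(e^{id[\theta_n,\theta_{n+1}]})$, and this region typically contains \emph{several} of the $\Omega_m$'s rather than lying inside one of them. This is exactly what expansion forces: small regions map to bigger ones, so the direction of your inclusion is backwards, and with the corrected direction the diameter inequality no longer chains to give a depth bound. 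There is a second difficulty on top of this: $\Omega_n$ can intersect Fatou components in $f^{-1}(F_p)\setminus F_p$, so $f(\Omega_n)$ need not even be disjoint from $F_p$, and hence the image of $\Omega_n$ need not be a region of the same kind at all. A third issue, which you flag but do not resolve, is that $\zj_p|_{\partial\bbD}$ is merely continuous, not injective, so there is no reason the arcs $\ell_n$ are Jordan curves, and ``passing to a subcollection'' does not obviously arrange this. Finally, you yourself note that the concluding count is an unresolved obstacle, so even granting the geometry the argument is incomplete.

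The paper's route is quite different and avoids all of this. It declares two rays landing at $q$ equivalent if they co-bound a disk whose interior contains no point of $P_f$; finiteness of $P_f$ gives finitely many classes. If infinitely many rays land at $q$, a pigeonhole over the classes produces two rays $R_1,R_2$ in the same class with $f(R_2)=R_1$ (after passing to an iterate). The disk $D$ bounded by $R_1\cup R_2$ is evenly covered since its interior avoids $P_f$, so one lifts it to obtain $R_3$ with $f(R_3)=R_2$, and iterating produces a nested sequence $R_1,R_2,R_3,\ldots$ that converges to an $f$-fixed ray; Milnor's Lemma 18.12 (finiteness of rays landing at a point that receives a fixed ray) then concludes. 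Notice this argument works with preimages, where expansion contracts, rather than with forward images, where it dilates; that is what makes the convergence work. If you want to salvage an area/diameter argument, the correct direction is to pull the regions back, not push them forward, which is in spirit what the paper's nested sequence of rays does.
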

  \begin{proof} By replacing $f$ by an iterate of $f$, we may assume
that $f(p)=p$ and $f(q)=q$.  Let $\cR$ be the set of rays in $\overline{F}_p$ which
land at $q$.

We now define an equivalence relation $\sim $ on $\cR$.  Let
$R_1,R_2\in \cR$.  If $R_1=R_2$, then $R_1\sim R_2$.  Suppose that
$R_1\ne R_2$.  Then $R_1\cup R_2$ is a simple closed curve.  We set
$R_1\sim R_2$ if one of the connected components of $S^2\setminus
(R_1\cup R_2)$ contains no element of $P_f$.  This defines an
equivalence relation on $\cR$.  Because $P_f$ is finite, this
equivalence relation has only finitely many equivalence classes
$\cR_1,\dotsc,\cR_j$.  

If $P_f$ contains just two points, then $f$ is Thurston equivalent to
$z\mapsto z^{\deg(f)}$ and the lemma is true in this case.  So we
assume that $P_f$ contains a point other than $p$ and $q$.  Hence if
$R,S\in \cR$ with $R\ne S$ and $R\sim S$, then exactly one connected
component of $S^2\setminus (R\cup S)$ contains no element of $P_f$.

Let $R\in \cR$.  Then $f^n(R)\in \cR$ for every nonnegative integer
$n$.  If the $j+1$ rays $R$, $f(R)$, $f^2(R),\dotsc,f^j(R)$ are not
distinct for every $R\in \cR$, then $\cR$ is finite.  So we may assume
that $R$ is chosen so that these $j+1$ rays are distinct.

One of the equivalence classes $\cR_1,\dotsc,\cR_j$ must contain at
least two of these $j+1$ rays.  We may assume that this equivalence
class is $\cR_1$.  So there exist $R_1,R_2\in \cR_1$ with $R_1\ne R_2$
such that some iterate of $f$ takes $R_2$ to $R_1$.  We replace $f$ by
this iterate and assume that $f(R_2)=R_1$.

Now let $D$ be the closed topological disk bounded by $R_1\cup R_2$
whose interior contains no element of $P_f$.  The interior of $D$ is
evenly covered by $f$.  Since $f(R_2)=R_1$, the disk $D$ therefore has
a lift to a disk $\widetilde{D}$ whose boundary is the union of $R_2$
and a ray $R_3$ in $\overline{F}_p$ which lands at $q$.  Then
$f(R_3)=R_2$ and $R_3\in \cR_1$ because the interior of
$\widetilde{D}$ contains no element of $P_f$.  Furthermore, the
closure of the connected component of $S^2\setminus (R_1\cup R_3)$
which contains no element of $P_f$ contains $R_2$.

This construction can be repeated.  We obtain an infinite sequence
$R_1,R_2,R_3,\ldots $ of rays in $\cR_1$ such that $f(R_{n+1})=R_n$
for every positive integer $n$.  Furthermore, the closure of the
connected component of $S^2\setminus (R_1\cup R_n)$ which contains no
element of $P_f$ contains $R_2,\dotsc,R_{n-1}$ for every integer $n\ge
3$.  It follows that the rays $R_1,R_2,R_3,\ldots $ converge to a ray
in $\cR$ fixed by $f$.  The proof of Lemma 18.12 in Milnor's book
\cite{m1} finally implies that if $\cR$ contains a ray fixed by $f$,
then only finitely many rays in $\overline{F}_p$ land at $q$.

This completes the proof of Lemma~\ref{lemma:land}.

\end{proof}

The third lemma is about how many points in the boundaries of two
immediate basins of attraction are accessible to a point that is in
neither of their closures.

\begin{lemma}\label{lemma:onebasin} Let $p$ and $q$ be distinct
elements of $P_f^\infty $, and let $x\in S^2\setminus
(\overline{F}_p\cup \overline{F}_q)$.  Then there exist at most two
points $y\in \partial F_p\cap \partial F_q$ for which there exists an
arc $\zg$ in $S^2$ with endpoints $x$ and $y$ such that $\zg\cap
(\overline{F}_p\cup \overline{F}_q)=\{y\}$.
\end{lemma}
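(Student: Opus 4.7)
The plan is to argue by contradiction: suppose three distinct points $y_1,y_2,y_3\in \partial F_p\cap \partial F_q$ admit arcs $\zg_1,\zg_2,\zg_3$ from $x$ as in the statement. We will construct an embedded copy of the complete bipartite graph $K_{3,3}$ in $S^2$, contradicting its non-planarity (Kuratowski's theorem).

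The first step is to build, inside each immediate basin, a tripod with feet at $y_1,y_2,y_3$. Using the Douady--Hubbard uniformization $\zj_p\co\overline{\bbD}\to \overline{F}_p$, choose distinct preimages $e^{i\zq_i^p}\in \zj_p^{-1}(y_i)$ (possible since the $y_i$ are distinct, so their $\zj_p$-preimage sets are disjoint) and let $\zd_i^p$ be the $\zj_p$-image of the radial segment from $0$ to $e^{i\zq_i^p}$. Because $\zj_p|_{\bbD}$ is a homeomorphism with $\zj_p(0)=p$, the arcs $\zd_1^p,\zd_2^p,\zd_3^p$ lie in $\overline{F}_p$, meet pairwise only at $p$, and satisfy $\zd_i^p\cap \partial F_p=\{y_i\}$. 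Construct arcs $\zd_i^q\subseteq \overline{F}_q$ centered at $q$ in the same way.

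The second step is to arrange $\zg_1,\zg_2,\zg_3$ to be pairwise disjoint except at $x$. Each $\zg_i$ lies in the open surface $W=S^2\setminus(\overline{F}_p\cup\overline{F}_q)$ except for its endpoint $y_i$. After replacing each $\zg_i$ by a piecewise-linear approximation inside $W$, we may assume the $\zg_i$ meet transversally at finitely many interior points; each such crossing can be resolved by the unique endpoint-preserving non-crossing reconnection inside a small disk in $W$, strictly reducing the total number of crossings. Iterating this process eventually yields pairwise disjoint arcs, and since every modification is carried out inside $W$, the new arcs continue to meet $\overline{F}_p\cup \overline{F}_q$ only at $y_i$.

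The union $G=\bigcup_{i=1}^3 (\zg_i\cup \zd_i^p\cup \zd_i^q)$ is now topologically a copy of $K_{3,3}$ embedded in $S^2$ with vertex parts $\{x,p,q\}$ and $\{y_1,y_2,y_3\}$. Pairwise edge-disjointness off shared vertices follows routinely: edges sharing a source $x$, $p$, or $q$ are disjoint off that source by construction; $\zd_i^p\cap \zd_j^q\subseteq \overline{F}_p\cap \overline{F}_q\subseteq \partial F_p\cap \partial F_q$ reduces to $\{y_i\}\cap\{y_j\}$; and $\zg_i\cap \zd_j^p\subseteq \zg_i\cap\overline{F}_p=\{y_i\}$ lies on $\zd_j^p$ only when $i=j$, similarly for $\zg_i\cap \zd_j^q$. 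Since $K_{3,3}$ is not planar, this gives the desired contradiction. The main technical obstacle is the surgery step: one must verify that the reconnection disks can all be chosen inside $W$, so that the modified $\zg_i$ still avoid $\overline{F}_p\cup \overline{F}_q$ away from $y_i$.
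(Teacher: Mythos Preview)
Your argument is correct, but it takes a longer route than the paper's. The paper uses only the six rays: for each $i$ take a ray $\zr_i\subseteq\overline{F}_p$ and a ray $\zs_i\subseteq\overline{F}_q$ landing at $y_i$; their union is a theta-graph (a $K_{2,3}$ on $\{p,q\}$ and $\{y_1,y_2,y_3\}$) lying entirely in $\overline{F}_p\cup\overline{F}_q$, whose complement in $S^2$ has exactly three components. Since $x\notin\overline{F}_p\cup\overline{F}_q$, the point $x$ lies in one of these components, and the closure of that component misses some $y_k$. Then the single arc $\zg_k$ would have to cross the theta-graph at a point of $\overline{F}_p\cup\overline{F}_q$ other than $y_k$, contradicting the hypothesis on $\zg_k$. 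So the paper never needs the $\zg_i$ to be pairwise disjoint, and the surgery step is avoided entirely. Your approach instead promotes the $K_{2,3}$ to an embedded $K_{3,3}$ by adjoining the $\zg_i$ as edges from $x$ and then invokes Kuratowski; this is the same separation fact repackaged, at the cost of the disjointification step. One small remark on that step: the issue you flag at the end---keeping the reconnection disks inside $W$---is not the real difficulty, since every interior crossing already lies in the open set $W$; the subtler point is that a single reconnection can introduce self-intersections in the new curves, so to make the induction go through you should count all double points (self and mutual) and observe that this total strictly decreases, discarding any closed loops that are split off along the way.
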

  \begin{proof} Suppose that there are three distinct such points
$y_1$, $y_2$, $y_3$.  Let $i\in\{1,2,3\}$.  Let $\zr_i$ be a ray in
$\overline{F}_p$ which joins $p$ and $y_i$, and let $\zs_i$ be a ray
in $\overline{F}_q$ which joins $q$ and $y_i$.  Then $S^2\setminus
(\bigcup_{j=1}^{3}(\zr_j\cup \zs_j))$ has three connected components.
So the set $\bigcup_{j=1}^{3}(\zr_j\cup \zs_j)$ separates $x$ from one
of the points $y_1$, $y_2$, $y_3$.

This contradiction proves Lemma~\ref{lemma:onebasin}.

\end{proof}

Here is the fourth lemma.  Figure~\ref{fig:canisotop} illustrates one
of the finitely many situations in question.  The region shaded by
line segments is a connected component of the complement of the union
of $\zr_x$, $\zr_{y_\ze}$, $T_\ze$ and $\zg\setminus \{z\}$.  Its
interior contains a point $p'\in P_f$.

  \begin{figure}
\centerline{\includegraphics{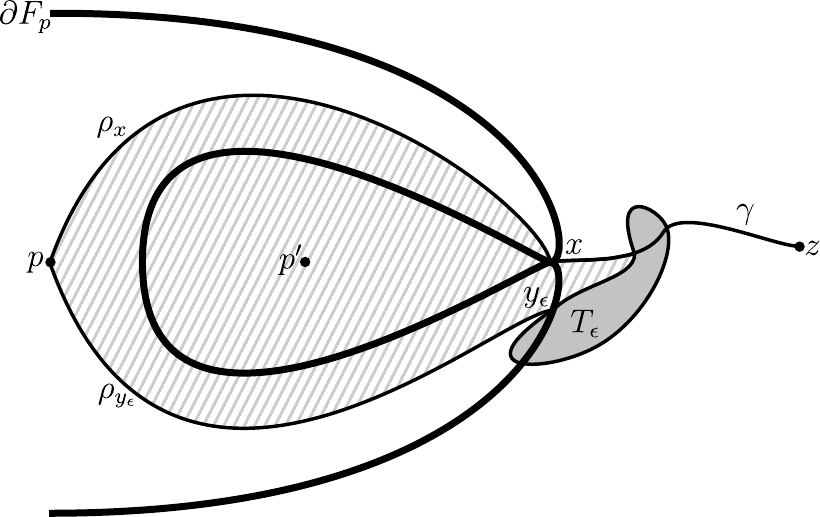}}
\caption{The situation in Lemma~\ref{lemma:canisotop}. The point $p'$ is the solid black dot in the middle.}
\label{fig:canisotop}
  \end{figure}

\begin{lemma}\label{lemma:canisotop} Let $p\in P_f^\infty $, and let
$C$ be a connected component of $S^2\setminus \overline{F}_p$.  Then
there exist only finitely many points $x\in\partial F_p$ for which the
following conditions are satisfied.
\begin{enumerate}
  \item There is a nontrivial arc $\zg$ in $S^2$ with one endpoint
$x$ whose other points are in $C$.  Let $z$ be the endpoint of $\zg$
other than $x$.
  \item There exist real numbers $\ze>0$ accumulating at 0 such that
for each of them there exists a path connected set $T_\ze$ in
$S^2\setminus \{z\}$ whose diameter is less than $\ze$ containing a
point $y_\ze\in \partial F_p$ and a point of $\zg$.
  \item There exist rays $\zr_x$ and $\zr_{y_\ze}$ in $\overline{F}_p$
which land at $x$ and $y_\ze$ such that the union of $\zr_x$,
$\zr_{y_\ze}$, $T_\ze$ and $\zg\setminus \{z\}$ separates $z$ from an
element of $P_f$.
\end{enumerate}
\end{lemma}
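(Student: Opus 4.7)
I would proceed by contradiction. Assume there exist infinitely many distinct points $x_1, x_2, \ldots \in \partial F_p$ satisfying conditions (1)--(3), with associated data $\zg_n, z_n, T_\ze^{(n)}, y_\ze^{(n)}, \zr_{x_n}, \zr_{y_\ze^{(n)}}$ and a postcritical point $p_n' \in P_f$ separated from $z_n$ by the union in~(3). Since $P_f$ is finite, pigeonhole to a subsequence on which $p_n' = p'$ is common, and by compactness of $\partial F_p$ in $S^2$ pass to a further subsequence with $x_n \to x_\infty \in \partial F_p$ and $z_n \to z_\infty \in S^2$.

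Next I would analyze how the bridges shrink. For each fixed $n$, as $\ze$ runs through the accumulating sequence, $T_\ze^{(n)}$ is path-connected, has diameter less than $\ze$, and contains both $y_\ze^{(n)} \in \partial F_p$ and a point $v_\ze^{(n)}$ of $\zg_n$; hence $y_\ze^{(n)}$ and $v_\ze^{(n)}$ accumulate at a common point $w_n \in \partial F_p \cap \zg_n$. Because the interior of $\zg_n$ is contained in $C \subset S^2 \setminus \overline{F}_p$, we have $\partial F_p \cap \zg_n \subset \{x_n\} \cup (\{z_n\} \cap \partial F_p)$, so $w_n \in \{x_n, z_n\}$, with $z_n \in \partial F_p$ in the second alternative. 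After passing to subsequences I assume uniformly either Case~(A) $w_n = x_n$ for all $n$, or Case~(B) $z_n \in \partial F_p$ and $w_n = z_n$ for all $n$.

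In each case I would select $\ze_n \to 0$ appropriately and pass to further subsequences so that the angular parameters in $\partial\bbD$ of the rays $\zr_{x_n}$ and $\zr_{y_{\ze_n}^{(n)}}$ under $\zj_p$ converge, yielding limit rays in $\overline{F}_p$ landing at $x_\infty$ (Case~A) or at $x_\infty$ and $z_\infty$ (Case~B). The loops $L_n = \zr_{x_n} \cup \zr_{y_{\ze_n}^{(n)}} \cup T_{\ze_n}^{(n)} \cup (\zg_n \setminus \{z_n\})$ enclose the fixed point $p'$ in a complementary component not containing $z_n$, so their Hausdorff limit is a nontrivial closed set still separating $p'$ from $z_\infty$. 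In Case~(A) the limit either degenerates to a single ray---forcing the enclosed region containing $p'$ to shrink to a point and thereby excluding $p'$ for large $n$, a contradiction---or exhibits two distinct rays landing at $x_\infty$. In the latter situation, after replacing $f$ by an iterate fixing $p$, Lemma~\ref{lemma:land} bounds the number of rays landing at $x_\infty$, leaving only finitely many combinatorial loop types near $x_\infty$ and hence only finitely many admissible nearby $x_n$, again a contradiction. Case~(B) is handled analogously: the limit is a closed curve $\zr^{(1)}_\infty \cup \zg_\infty \cup \zr^{(2)}_\infty$ encircling $p'$, and here Lemma~\ref{lemma:onebasin} applied to a point exterior to $\overline{F}_p$ bounds the possible accesses to $\partial F_p$ used in constructing the bridges, producing the contradiction.

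The principal obstacle is making the ray-convergence argument rigorous when $x_\infty$ is not preperiodic, since the angular preimage $\zj_p^{-1}(x_\infty) \subset \partial\bbD$ may in principle be large. My plan to handle this is to use the continuous extension of $\zj_p$ to $\overline{\bbD}$ together with the dynamics of $f$: if infinitely many distinct landing angles appear among the $\zr_{x_n}$, then by forward iteration of $f$ one can promote $x_\infty$ into a preperiodic orbit where Lemma~\ref{lemma:land} applies, thereby completing the contradiction.
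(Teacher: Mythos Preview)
There is a genuine gap, and it stems from taking the wrong limit. You pass to a subsequence in $n$ and study the accumulation point $x_\infty$; the paper instead works with a \emph{single fixed} $x$ satisfying the hypotheses and lets $\ze \to 0$. Since every point of $\zg$ other than $x$ lies in $C$ (so $\zg\cap\partial F_p=\{x\}$) and $z\notin T_\ze$, the sets $\overline{T_\ze}$ Hausdorff-converge to $x$; hence a subsequence of the rays $\zr_{y_\ze}$ converges to a ray $\zr'_x$ landing at $x$, and condition~(3) forces the simple closed curve $\zr_x\cup\zr'_x$ to separate $C$ from some element of $P_f$. The paper then observes that distinct values of $x$ produce curves separating $C$ from \emph{distinct} elements of $P_f$, so finiteness of $P_f$ finishes immediately. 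No limit over $n$, no $x_\infty$, and no appeal to Lemma~\ref{lemma:land} is needed.

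Your route, by contrast, forces you to control rays landing at an arbitrary limit point $x_\infty\in\partial F_p$. The proposed remedy---``by forward iteration of $f$ one can promote $x_\infty$ into a preperiodic orbit''---does not work: iterating $f$ cannot make a non-preperiodic point preperiodic, so Lemma~\ref{lemma:land} is simply unavailable. Even if one granted finitely many rays at $x_\infty$, the step ``hence only finitely many admissible nearby $x_n$'' is a non sequitur: nothing prevents the angles of the $\zr_{x_n}$ (at pairwise distinct $x_n$) from all converging to the same limit angle. Two smaller points: your Case~(B) is vacuous, since condition~(1) puts $z\in C\subset S^2\setminus\overline{F}_p$, so $z\notin\partial F_p$ and $w_n=x_n$ always; and the assertion that separation survives Hausdorff limits of the loops $L_n$ would itself need justification. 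Re-reading condition~(1) carefully leads naturally to the per-$x$ limit that the paper uses.
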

  \begin{proof} Let $x$ be a point in $\partial F_p$ which satisfies
the conditions of the lemma.  The compactness of $S^2$, $\zg$ and
$\overline{F}_p$ and condition 2 imply that a sequence of the sets
$\overline{T_\ze}$ converges in the Hausdorff topology to a point in
$\zg\cap \partial F_p$.  This point must be $x$.  Using condition 3,
we see that by passing to a subsequence we may assume that the
corresponding sequence of rays $\zr_{y_\ze}$ converges to a ray
$\zr'_x$ which lands at $x$ such that $\zr_x\cup \zr'_x$ is a simple
closed curve which separates $C$ from an element of $P_f$.  These
simple closed curves separate $C$ from different elements of $P_f$ for
different values of $x$.  Since $P_f$ is finite, there are only
finitely many such points $x$.

This proves Lemma~\ref{lemma:canisotop}.

\end{proof}

We are now ready to prove the main theorem, Theorem~\ref{thm:main}.

\begin{thm}\label{thm:main} Let $f$ be a Thurston map such that (i) if
it does not lift to a torus endomorphism, it has no Levy cycles and
(ii) if it lifts to a torus endomorphism, the associated affine map
has eigenvalues outside the unit circle.  Then every sufficiently
large iterate of $f$ is isotopic to the subdivision map of a finite
subdivision rule.
\end{thm}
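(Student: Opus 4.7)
The plan is to follow the general template of \cite{subrat} and \cite{BM}, but to use Böttcher coordinates inside the immediate basins $F_p$ and uniform expansion on the compact piece $\cM$ for everything else. First, the Euclidean $(2,2,2,2)$ case is handled separately by Theorem~\ref{thm:euclid}, so I may assume $\cO_f$ is not of signature $(2,2,2,2)$. By hypothesis $f$ has no Levy cycle, so Theorem~\ref{thm:expanding_conditions} lets me replace $f$ by an isotopic smooth Böttcher expanding map, and I fix $\cM$ together with the expansion ratio $\rho<1$ as above. Passage to an iterate throughout is harmless.

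\emph{Stage 1: initial cell structure.} I construct a finite CW decomposition $\cC_0$ of $S^2$ whose 1-skeleton $G_0$ contains $P_f$ in its 0-skeleton and whose 2-cells are closed topological disks, each of whose interior meets $P_f$ in at most one point. Inside each immediate basin $F_p$ I use the Böttcher coordinate $\zj_p$ to lay down finitely many rays landing at a chosen finite preperiodic set $E_p\subset\partial F_p$, together with one equipotential close to $\partial F_p$; these rays plus the equipotential cut $\overline{F}_p$ into closed topological disks. That $E_p$ can be taken finite and that only finitely many rays over $E_p$ are needed is precisely Lemma~\ref{lemma:land}. Outside $\bigcup_p F_p$ I connect selected points of the $E_p$'s to one another (and to other points of $P_f$) by arcs through the Julia set so that every complementary region is a closed topological disk. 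The connectivity inside each prospective tile is provided by Lemma~\ref{lemma:tileddisk}, while Lemmas~\ref{lemma:onebasin} and~\ref{lemma:canisotop} bound how many such arcs must be used, and in particular keep $G_0$ finite.

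\emph{Stage 2: iteration and isotopy.} Let $\cC_n$ be the cell structure whose 1-skeleton is $f^{-n}(G_0)$. Inside the Fatou set, the conjugacy $\zj_q^{-1}\circ f\circ \zj_p(z)=z^k$ makes $\cC_n$ an automatic cellular refinement of $\cC_0$ in each Fatou component for every $n$. On $\cM$, the uniform expansion inequality gives that the diameters of 2-cells of $\cC_n$ meeting $\cM$ decay like $\rho^n$; together with $f^{-n}(\cM)\subset \interior(\cM)$ and the finite cover $\{U_i\}$ of $S^2\setminus \bigcup_p F_p$ by small evenly covered sets, this means that for $n$ large every 2-cell of $\cC_n$ meeting $\cM$ lies inside an evenly covered neighborhood of a 2-cell of $\cC_0$. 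I then isotope $f^n$ rel $P_f$ to a map $g$ with $g^{-1}(G_0)\supset G_0$, built cell by cell: canonically in Fatou components using $\zj_p$, and on the Julia-set side by sliding the small pulled-back 1-cells onto the corresponding 1-cells of $\cC_0$. By construction $g$ is then the subdivision map of the finite subdivision rule with tile types the 2-cells of $\cC_0$ and subdivision $\cC_n$ of $\cC_0$.

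The main obstacle is the matching of arcs along $\bigcup_p\partial F_p$: one must align the arcs of $f^{-n}(G_0)$ landing at each preimage of a landing point $x\in E_p$ with the arcs of $G_0$ approaching $x$, without producing a Jordan curve that encloses a postcritical point on the wrong side and so blocks the local isotopy. The three finiteness lemmas are exactly the tools needed to control this. Lemma~\ref{lemma:land} bounds the number of rays at each preperiodic landing point, so only finitely many arc-germs of $G_0$ can enter $\partial F_p$ at each $x$; Lemma~\ref{lemma:onebasin} says that between any two immediate basins and any exterior point at most two common boundary points are accessible, so the combinatorics of 1-cells joining two different $\partial F_p$'s is finite; and Lemma~\ref{lemma:canisotop} isolates finitely many boundary points where the candidate isotopy could be obstructed by a small ``bubble''. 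Choosing $n$ large enough that every pulled-back tile is smaller than the tolerance prescribed by these finite lists forces every remaining local matching to be a straightforward cellular homotopy, completing the construction.
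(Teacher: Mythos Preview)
Your outline has the right architecture---build a graph $G_0$, pull it back, isotope $G_0$ into $f^{-n}(G_0)$---and you correctly locate the hard part at $\partial F_p$. But two of your load-bearing assertions fail, and the paper's proof is organized precisely to avoid them.

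First, your Stage~1 cell structure is ill-defined. You want rays plus one equipotential to cut $\overline{F}_p$ into closed topological disks, but the outer pieces then carry a portion of $\partial F_p$ on their boundary, and $\partial F_p$ is in general only a continuous image of $\partial\bbD$ under $\zj_p$, not a Jordan curve; those pieces need not be disks. The paper never puts $\partial F_p$ in the $1$-skeleton. Its graph is $\cA\cup\cB\cup\cC$: equipotentials $\partial D_p$ at fixed radius $\zr$ together with the isolated points of $P_f^0$ (the set $\cA$); ray-tails from $\partial D_p$ to the postcritical points in $P_f^1\subset\partial F_p$ (the set $\cB$, finite by Lemma~\ref{lemma:land}); and arcs $\cC$ joining distinct curves in $\cA$, each meeting $\overline{F}_p$ only in a subarc of a single ray and subject to ten explicit conditions. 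The ``Julia tiles'' then straddle $\partial F_p$, and condition~10 is what forces them to be closed disks. Lemma~\ref{lemma:tileddisk} is not used here at all; it appears only later, in the isotopy step, to thread $\widehat\zg$ through the tiled disk $W_{p_i}$ to a point $p_i\in P_f^0$ in its interior.

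Second, the Fatou side is \emph{not} an automatic cellular refinement. The equipotential $\partial D_p$ at radius $\zr$ is not in $f^{-n}(G_0)$: the relevant preimage $\partial W_p$ lies at a larger radius. Nor do the radial pieces match, since the $\cC$-arcs land on $\partial F_p$ at points that are not preperiodic. The paper fixes this by passing from $S$ to $S_\cR$: each $\overline D_p$ is subdivided by many concentric circles (the \emph{same} number for every $p$) and many radii, arranged so that each arc of $\partial D_p$ between consecutive $\cB\cup\cC$-endpoints carries exactly $M$ radial vertices. After the $\cB$- and $\cC$-isotopies (the latter using condition~5 and Lemma~\ref{lemma:canisotop} exactly as you say), the equality of circle counts and the $M$-divisibility of radial vertex counts are what allow the final isotopy of the $1$-skeleton of $\overline D'_p$ into that of $W_p$. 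Without this extra bookkeeping your Stage~2 isotopy cannot be completed.
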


Before giving the proof, we discuss the strategy of the proof and the
dependence of the constants that arise in the proof.  The first thing
we do is identify three subsets $P_f^\infty$, $P_f^1$, and $P_f^0$
that form a partition of the postcritical set $P_f$.  The idea is to
construct a graph, $G\subseteq S^2$, invariant up to isotopy, and with vertex set containing $P_f$.  For
this we construct three finite sets $\cA$, $\cB$, $\cC$ of curves in $S^2$. The sets $\cB, \cC$ consist of arcs, while $\cA$ consists of closed curves which are either simple or trivial (a point); the latter are the elements of $P_f^0$.
Our first approximation of $G$ as a set is the union of the curves in
$\cA \cup \cB \cup \cC$.

Our first constant $\rho\in (0,1)$ is chosen sufficiently close to $1$
based on an open cover $\{U_i\}$ of $S^2\setminus \bigcup_{p\in
P_f^\infty}F_p$ that exists because $f$ is B\"ottcher expanding.  Once
we have $\rho$, we construct the collection $\cA$ of elements of
$P_f^0$ and of equipotential curves corresponding to radius $\rho$,
one for every $p\in P_f^\infty $. The set $\cB$ consists of tails of
rays.  For every $p\in P_f^\infty$ the set $\cB$ contains the tails
from the equipotential of $F_p$ in $\cA$ to an element $q\in P_f^1
\cap \partial F_p$.

Our second constant $\delta$ is a sufficiently small positive real
number such that $d(p,x) > 4\zd$ if $p\in P_f^0$ and $x$ is either in
$P_f^0$ with $x\ne p$ or $x$ is in the closure of an immediate basin
of attraction.  The set $\cC$ is a finite collection of arcs joining
curves in $\cA$ which are carefully chosen to satisfy ten conditions;
the second and third conditions depend on $\zd$.  

Having constructed the sets $\cA$, $\cB$ and $\cC$, we give $S^2$ the
structure of a cell complex $S$ whose 1-skeleton is the union of the
curves in $\cA$, $\cB$ and $\cC$.  We obtain a cell complex
$f^{-n}(S)$ for every nonnegative integer $n$.  Tiles in $f^{-n}(S)$
which are mapped by some iterate of $f$ into immediate basins of
attraction are called Fatou tiles.  The other tiles of $f^{-n}(S)$ are
Julia tiles.

We next choose a sufficiently small real number $\ze\in (0,\zd)$. The
number $\ze$ is chosen to give certain disjointness properties of
$\ze$-neighborhoods of the arcs in $\cC$, and to also satisfy one
additional constraint coming from one of the ten conditions. Having
chosen $\ze$, we use the B\"{o}ttcher expanding property to choose a
sufficiently large positive integer $N$ such that if $n\ge N$, then
every Julia tile in $F^{-n}(S)$ has diameter less that $\ze$.

Now suppose that $n$ is a positive integer with $n\ge N$.  We enlarge
$\cA \cup \cB \cup \cC$ to get the final form of $G$ by adding
equipotential curves and radial arcs in the immediate basins of
attraction. We add enough of these so that every Fatou tile in
$f^{-n}(S)$ has diameter less than $\ze$. Finally, we show that $f^n$
is isotopic rel $P_f$ to the subdivision map of a finite subdivision
rule whose 1-skeleton is $G$.

  \begin{proof} We partition the postcritical set $P_f$ of $f$ into
three subsets.  The subset $P_f^\infty $ consists of those periodic
elements of $P_f$ whose cycles contain critical points.  We let
  \begin{equation*}
P_f^1=\{p\in P_f:p\in \partial F_q,q\in P_f^\infty
\}\quad\text{and}\quad
P_f^0=\{p\in P_f:p\notin \bigcup_{q\in P_f^\infty }\overline{F}_q\}.
  \end{equation*}

We intend to construct a finite subdivision rule $\cR$.  This involves
equipping $S^2$ with the structure of a CW complex $S_\cR$.  The
complex $S_\cR$ will be defined as a subdivision of a CW complex $S$.
We will define $S$ by means of its 1-skeleton, which is a union of
curves.  These curves belong to three finite sets of curves $\cA$,
$\cB$ and $\cC$.  The curves in $\cA$ are either trivial, just points,
or simple closed curves.  The curves in $\cB$ and $\cC$ are nontrivial
arcs.

We construct the set of curves $\cA$ in this paragraph.  It depends on
a real parameter $\zr$ with $0<\zr<1$.  Let $\bbD_\zr=\{z\in
\bbC:\left|z\right|<\zr\}$.  For every $p\in P_f^\infty $ let
$D_p=\zj_p(\bbD_\zr)\subseteq F_p$.  Recall that we have open sets
$U_i$ whose existence is a consequence of the fact that $f$ is
B\"{o}ttcher expanding.  We choose and fix $\zr$ so that the open
disks $D_p$ together with the open sets $U_i$ cover $S^2$.  The simple
closed curves in $\cA$ are those curves of the form $\partial D_p$ for
$p\in P_f^\infty $.  The trivial curves in $\cA$ are the points in
$P_f^0$.  This completes the construction of $\cA$.

Now we construct the set $\cB$.  If $P_f^1=\emptyset $, then
$\cB=\emptyset$.  Suppose that $P_f^1\ne \emptyset $.  Let $p\in
P_f^1$.  Then there exists $q\in P_f^\infty $ such that $p\in \partial
F_q$.  For every such point $q$ and for every ray $R$ in
$\overline{F}_q$ which lands at $p$, we require that $\cB$ contains
the subarc of $R$ which has one endpoint $p$ and whose other endpoint
is in $\partial D_q$.  This completes the construction of $\cB$.
Lemma~\ref{lemma:land} implies that $\cB$ is a finite set of arcs.

We next construct the set $\cC$.  For this we choose a real number
$\zd>0$ such that $d(p,x)>4\zd$ if $p\in P_f^0$ and $x\in P_f^0\cup
(\bigcup_{q\in P_f^\infty }\overline{F}_q)$ with $x\ne p$.  For every
$x\in S^2$, let $B_r(x)$ be the open ball of radius $r>0$ about $x$.
The set $\cC$ will be constructed as a set of arcs $\zg$ which satisfy
the following conditions.

\begin{enumerate}
  \item The endpoints of every arc $\zg$ are in distinct curves in
$\cA$, and one of these endpoints is in $\bigcup_{p\in P_f^\infty
}^{}\partial D_p$ if $P_f^\infty \ne \emptyset $.
  \item If $p\in P_f^0 $ and $p\in \zg$, then $\zg\cap \partial
B_\zd(p)$ and $\zg\cap \partial B_{2\zd}(p)$ each contain exactly one
point.
  \item If $p\in P_f^0$ and $p\notin \zg$, then $\zg\cap
\overline{B}_{2\zd}(p)=\emptyset $.
  \item If $p\in P_f^\infty $ and $\zg\cap \partial D_p\ne \emptyset
$, then $\zg\cap \overline{F}_p$ is a subarc of a ray.
  \item If $p\in P_f^\infty $ and $\zg\cap \partial D_p\ne \emptyset
$, then $\zg$ avoids the finitely many points in $\partial F_p$ which
satisfy the conditions of Lemma~\ref{lemma:canisotop} relative to the
connected component of $S^2\setminus \overline{F}_p$ which contains
$\zg\setminus \overline{F}_p$.
  \item If $p\in P_f^\infty $ and $\zg\cap \partial D_p=\emptyset $,
then $\zg\cap \overline{F}_p=\emptyset $.
  \item If $\zb\in \cB$, then $\zg\cap \zb=\emptyset $.
  \item Two such arcs $\zg$ may meet only at a point of $P_f^0$.
  \item Every element of $P_f^0$ is an endpoint of exactly two of the
arcs $\zg$.
  \item The connected components of the complement in $S^2$ of the
union of the curves in $\cA$, $\cB$ and $\cC$ are open topological
disks whose closures are closed topological disks.
\end{enumerate}

Let $G$ be the union of the curves in $\cA$ and $\cB$.  As we
construct arcs in $\cC$, we adjoin them to $G$.  So $G$ will grow
during this construction.  It will eventually become a graph.

If $P_f^\infty =\emptyset $, then $G=P_f=P_f^0$.  In this case we
construct the arcs in $\cC$ so that their union $G$ is a simple closed
curve.  It is easy to do this and satisfy conditions 1--10.

So suppose that $P_f^\infty \ne \emptyset $.  Figure~\ref{fig:gamma}
contains a schematic diagram of one possibility for $G$ after the
construction of $\cC$ is complete.  The large dots are in $P_f^0$.
The small dots are in $P_f^1$.  The circles are the simple closed
curves in $\cA$.  The line segments are the arcs in $\cB$.  The
remaining arcs are the arcs in $\cC$ which we are about to construct.

  \begin{figure}
\centerline{\includegraphics{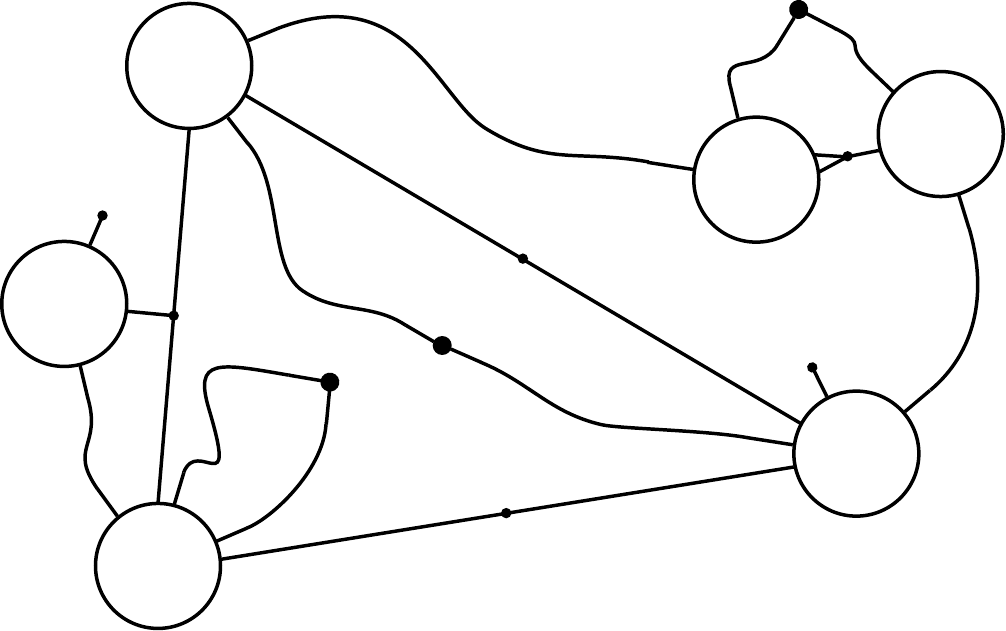}}
\caption{The union of the curves in $\cA$, $\cB$ and $\cC$ }
\label{fig:gamma}
  \end{figure}

Let $p\in P_f^0$.  It is easy to construct an arc $\za$ with one
endpoint $p$, one endpoint in $\bigcup_{q\in P_f^\infty
}\partial D_q$, disjoint from $P_f\setminus \{p\}$ and satisfying
conditions 2, 3, 5 and 7.  The arc $\za$ has a minimal subarc $\zg$
which joins $p$ and $\bigcup_{q\in P_f^\infty }\overline{F}_q$.
Lemma~\ref{lemma:onebasin} implies that by choosing $\za$ to avoid
finitely many points, we may assume that the endpoint of $\zg$ other
than $p$ lies in just one of the sets $\partial F_q$ with $q\in
P_f^\infty $.  We then extend $\zg$ along a ray in $\overline{F}_q$ to
$\partial D_q$.  Now $\zg$ satisfies conditions 1--7.  We construct two such
arcs $\zg$ for every $p\in P_f^0$ so as to satisfy conditions 1--9.
We put these arcs in $\cC$ and adjoin them to $G$.  After doing this
for every element of $P_f^0$, every connected component of $G$
contains $\partial D_p$ for some $p\in P_f^\infty $.

Suppose that there exist $p,q\in P_f^\infty $ such that $\partial D_p$
and $\partial D_q$ lie in different connected components of $G$.  We
partition $P_f^\infty $ into two disjoint nonempty subsets $Q_1$ and
$Q_2$ such that if $p\in Q_1$ and $q\in Q_2$, then $\partial D_p$ and
$\partial D_q$ lie in different connected components of $G$.  Let
$H_i=\bigcup_{q\in Q_i}\overline{F}_q$ for $i\in \{1,2\}$.

Suppose that $H_1\cap H_2$ is an infinite set.  Then there exist $p\in
H_1$ and $q\in H_2$ such that $\overline{F}_p\cap \overline{F}_q$ is
an infinite set.  Now we repeatedly apply Lemma~\ref{lemma:onebasin}
to $p$, $q$ and $x\in P_f^\infty \setminus \{p,q\}$ to conclude that
there exists $y\in \partial F_p\cap \partial F_q$ such that $y\notin
\partial F_{p'}\cup P_f^1$.  We choose a ray in $\overline{F}_p$ with endpoint $y$ and a
ray in $\overline{F}_q$ with endpoint $y$.  The union of these rays
contains a subarc $\zg$ with endpoints in $\partial D_p$ and $\partial
D_q$.  It is possible to choose $y$ so that $\zg$ satisfies condition
5.  Therefore conditions 1--9 are maintained by putting $\zg$ in
$\cC$.  We put $\zg$ in $\cC$ and adjoin it to $G$.  This reduces the
number of connected components of $G$.

Suppose that $H_1\cap H_2$ is a finite set.  Then we construct an arc
$\za$ which joins $H_1$ and $H_2$ and which avoids $H_1\cap H_2$, all
arcs in $\cB\cup \cC$ already constructed and all closed balls
$\overline{B}_{2\zd}(p)$ with $p\in P_f^0$.  The arc $\za$ has a
minimal subarc $\zg$ which joins $\overline{F}_p$ and $\overline{F}_q$
with $p\in Q_1$ and $q\in Q_2$.  We may choose $\za$ so that $\zg$
satisfies condition 5.  Using Lemma~\ref{lemma:onebasin}, we find that
$\za$ may also be chosen so that $p$ and $q$ are unique.  We extend
$\zg$ along rays in $\overline{F}_p$ and $\overline{F}_q$ to an arc
with endpoints in $\partial D_p$ and $\partial D_q$ while satisfying
conditions 1--9.  We put this arc in $\cC$ and adjoin it to $G$.  This
reduces the number of connected components of $G$.

After constructing finitely many arcs as above, the set $G$ is
connected.  So the connected components of $S^2\setminus G$ are open
topological disks.  Let $D$ be one of these disks, and suppose that
$\overline{D}$ is not a closed topological disk.  Then there exists a
simple closed curve $\za$ in $\overline{D}$ with exactly one point not
in $D$ which satisfies the following property.  There exist $p,q\in
P_f^\infty $ such that $\partial D_p\cap \overline{D}\ne \emptyset $, 
$\partial D_q\cap \overline{D}\ne \emptyset $ and $\za$ separates 
$\partial D_p$ from $\partial D_q$.
Working as in the previous paragraph, we construct an arc $\zg$ with
interior in $D$ satisfying conditions 1--9 which joins $\partial D_{p'}$ and
$\partial D_{q'}$, where $p'$ and $q'$ are elements of $P_f^\infty $ such as
$p$ and $q$.

In this way, we can eventually satisfy every condition 1--10.  This
completes the construction of $\cC$.

We now have $\cA$, $\cB$ and $\cC$.  The union $G$ of their curves
contains $P_f^0$ and $P_f^1$ but not $P_f^\infty $.  We make $G$ into
a 1-dimensional CW complex (graph) in the straightforward way: we
declare that its vertices are the endpoints of the arcs in $\cB\cup
\cC$.  This in turn equips $S^2$ with the structure of a 2-dimensional
CW complex $S$.

For every nonnegative integer $n$ we use $f^n$ to pull back $S$ to
obtain a CW complex $f^{-n}(S)$.  We call the closed 2-cells of such
complexes tiles.  We say that a tile is a Fatou tile if it is
contained in some Fatou component, equivalently, if its image under
some iterate of $f$ is contained in $F_p$ for some $p\in P_f^\infty $.
Tiles which are not Fatou tiles are Julia tiles.

We next choose a real number $\ze$ with $0<\ze<\zd$.  Conditions 2 and
3 imply that every arc $\zg\in \cC$ intersects $S^2\setminus
\bigcup_{p\in P_f^0}B_{\zd}(p)$ in a subarc $\zg^*$.  We choose an
open topological disk $D_\zg\subseteq S^2\setminus P_f$ which contains
$\zg^*$ such that $D_\zg\cap D_{\zg'}=\emptyset $ if $\zg,\zg'\in \cC$
with $\zg\ne \zg'$.  We also require that i) $D_\zg\cap
\overline{F}_p=\emptyset $ if $p\in P_f^\infty $ such that $\zg\cap
\overline{F}_p=\emptyset $ and ii) $D_\zg\cap B_{2\zd}(p)=\emptyset $
if $p\in P_f^0$ such that $p\notin \zg$.  We choose $\ze$ so that
$D_\zg$ contains the $\ze$-neighborhood of $\zg^*$ for every $\zg\in
\cC$.

We put one more restriction on $\ze$ in this paragraph.  Condition 5
implies that $\ze$ may be chosen so that the following holds.  Let
$\zg$ be any element of $\cC$ which has an endpoint in $\partial D_p$
for some $p\in P_f^\infty $.  Let $x$ be the point of $\zg$ in
$\partial F_p$, and let $z$ be the endpoint of $\zg$ not in
$\overline{F}_p$.  Let $T$ be a path connected set in $S^2\setminus
\{z\}$ with diameter less than $\ze$ which contains a point of $\zg$
and a point $y\in \partial F_p$.  Let $\zr_x$ and $\zr_y$ be rays in
$\overline{F}_p$ which land at $x$ and $y$, respectively.  Then the
union of $\zr_x$, $\zr_y$, $T$ and $\zg\setminus \{z\}$ does not
separate $z$ from an element of $P_f$.  This completes our choice of
$\ze$.

We next choose a positive integer $N$.  Recall that the assumptions
and the choice of $\zr$ provide finitely many open sets $U_i$ which
cover the Julia tiles of $S$ such that the diameters of the connected
components of the sets $f^{-n}(U_i)$ tend to 0 as $n\to \infty $.  We
may refine this collection to a finite collection of open sets $V_j$
such that every $V_j$ is contained in some $U_i$, the $V_j$'s cover
the Julia tiles of $S$ and every $V_j$ intersects every Julia tile of
$S$ in a connected set.  It follows that there exists a positive
integer $N$ such that the diameter of every Julia tile in $f^{-n}(S)$
is less than $\ze$ for every integer $n\ge N$.

We fix an integer $n\ge N$.  We will prove that $f^n$ is isotopic to
the subdivision map of a finite subdivision rule $\cR$.  

Now we define $S_\cR$.  Let $p\in P_f^\infty $.  The disk $D_p$ is
defined so that $D_p=\zj_p(\bbD_\zr)$.  We subdivide $\bbD_\zr$ using
radii and concentric circles as in Figure~\ref{fig:drho}, in general
with many radii and concentric circles, not necessarily uniformly
spaced.  We do this so that the number of concentric circles is
independent of $p$ and if an arc in $\cB\cup \cC$ has an endpoint
$x\in \partial D_p$, then $\zj_p^{-1}(x)$ is not a vertex of this
subdivision of $\bbD_\zr$.  These subdivisions of $\bbD_\zr$, one for
every $p\in P_f^\infty $, induce subdivisions of $S$ and $f^{-n}(S)$.
The Julia tiles in $f^{-n}(S)$ have diameters less than $\ze$.  We
choose these subdivisions of $\bbD_\zr$ so that every Fatou tile in
the induced subdivision of $f^{-n}(S)$ also has diameter less than
$\ze$.  Thus every tile in the induced subdivision of $f^{-n}(S)$ has
diameter less than $\ze$.  We put one more restriction on these
subdivisions.  By possibly including more radii in these subdivisions,
we may assume that there exists a positive integer $M$ such that the
following condition holds.  If $p\in P_f^\infty $, if $\zg_1,\zg_2\in
\cB\cup \cC$ such that $\zg_i$ has an endpoint $x_i\in \partial D_p$
and if one of the two arcs $\za\subseteq \partial D_p$ whose endpoints
are $x_1$ and $x_2$ meets no other arcs in $\cB\cup \cC$, then $\za$
contains exactly $M$ vertices of the subdivision of $D_p$.  The
subdivision of $S$ obtained in this way is $S_\cR$.

  \begin{figure}
\centerline{\includegraphics{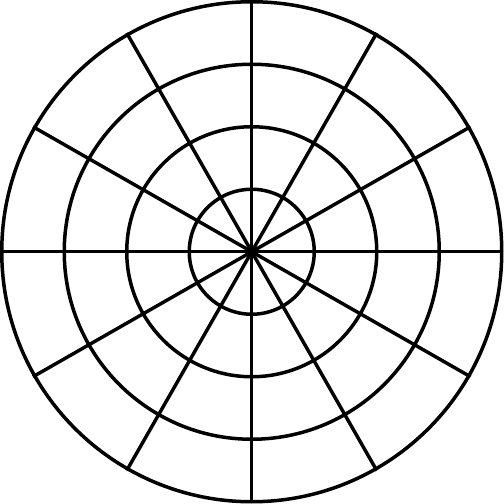}}
\caption{Subdividing $\bbD_\zr$}
\label{fig:drho}
  \end{figure}

It remains to prove that the 1-skeleton of $S_\cR$ is isotopic rel
$P_f$ to a subcomplex of the 1-skeleton of $f^{-n}(S_\cR)$.

We begin the construction of an isotopy by moving only points near the
arcs in $\cB$.  Let $p\in P_f^1$.  Let $\zb$ be an arc in $\cB$ which
contains $p$.  Then $\zb$ is a subarc of a ray in some
$\overline{F}_q$ such that $p\in \partial F_q$.  Moreover, $f^n(\zb)$
is a subarc of a ray in $\overline{F}_{f^n(p)}$, and $f^n(\zb)$
contains a unique arc $\zb'\in \cB$.  So $\zb'$ lifts via $f^n$ to a
subarc $\zg$ of $\zb$.  So there exists a point pushing isotopy taking
$\zb$ to $\zg$ which fixes the complement of $F_q$ and even the
complement of a small neighborhood of $\zb$ which avoids $q$.  Because
$p$ is the only element of $P_f$ in this neighborhood, this isotopy
fixes $P_f$.  Such isotopies, one for every arc in $\cB$, can be
combined into one isotopy which moves the arcs in $\cB$ into the
1-skeleton of $f^{-n}(S_\cR)$.

Now we prepare to move points near the arcs in $\cC$.  We define for
every $p\in P_f^0\cup P_f^\infty $ a subcomplex $W_p$ of
$f^{-n}(S_\cR)$ which is a closed topological disk with $p$ in its
interior.  If $p\in P_f^\infty $, then $W_p$ is the connected
component of $f^{-n}(\overline{D}_{f^n(p)})$ which contains $p$.  Then
$D_p\subseteq W_p\subseteq F_p$ and the boundary of $W_p$ is an
equipotential in $F_p$.  Now suppose that $p\in P_f^0$.  Let $W_p$ be
the smallest closed topological disk such that $W_p$ contains every
tile of $f^{-n}(S_\cR)$ which meets $\overline{B}_\zd(p)$.  Since the
diameters of the tiles of $f^{-n}(S_\cR)$ are less than $\ze<\zd$, we
have that $B_\zd(p)\subseteq W_p\subseteq B_{2\zd}(p)$.

Let $\zg\in \cC$.  We begin to define an arc $\widehat{\zg}$ in the
1-skeleton of $f^{-n}(S_\cR)$ which is isotopic to $\zg$.  Let $x$ be
an endpoint of $\zg$.  If $x\in P_f^0$, then set $p_1=x$.  If $x\in
\partial D_p$ for some $p\in P_f^\infty $, then set $p_1=p$.  We
define $p_2$ in the same way using the other endpoint of $\zg$.  Let
$W$ be the subcomplex of $f^{-n}(S_\cR)$ which as a set is the union
of all tiles of $f^{-n}(S_\cR)$ which meet $\zg$ but are not contained
in either $W_{p_1}$ or $W_{p_2}$.  The choices of $\ze$ and $N$ imply
that $W\subseteq D_\zg$.  The complexes $W_{p_1}$ and $W_{p_2}$ are
constructed so that $\zg\cap \partial W_{p_1}\ne \emptyset $ and
$\zg\cap \partial W_{p_2}\ne \emptyset $.  So the 1-skeleton of $W$
contains a minimal arc $\widehat{\zg}$ which joins $\partial W_{p_1}$
and $\partial W_{p_2}$.  If $i\in \{1,2\}$ and $p_i\in P_f^\infty $,
then $\widehat{\zg}$ meets $\partial F_{p_i}$ in exactly one point.

Suppose that $i\in \{1,2\}$ and that $p_i\in P_f^0$.  We next use
Lemma~\ref{lemma:tileddisk} to construct an extension of
$\widehat{\zg}$ to $p_i$.  We take the complex $X$ in
Lemma~\ref{lemma:tileddisk} to be $W_{p_i}$.  Let $v=p_i$.  Let $u_1$
be the unique vertex in $\partial W_{p_i}\cap \widehat{\zg}$.  The
vertex $u_2$ is gotten in the same way using the other arc in $\cC$
which contains $p_i$.  We use the arc in Lemma~\ref{lemma:tileddisk}
to extend $\widehat{\zg}$ to an arc in the 1-skeleton of
$f^{-n}(S_\cR)$ with endpoint $p_i$.  This completes the definition of
$\widehat{\zg}$.

Now that we have $\widehat{\zg}$, we wish to construct an isotopy rel
$P_f$ which moves $\zg$ to $\widehat{\zg}$.  Moreover, if $p_i=p\in
P_f^\infty $ for either $i=1$ or $i=2$, then we wish to be able to
extend this isotopy to the part of the 1-skeleton of $S_\cR$ which is
in $D_p$.  To do this, we must deal with the difficulty that $\partial
F_p$ might be complicated near $\zg\cap \partial F_p$.  In particular,
the intersection of $F_p$ and the $\ze$-neighborhood of $\zg$ might
not be connected, as in Figure~\ref{fig:gammaist}.  Both $\zg$ and
$\widehat{\zg}$ are drawn as thick arcs in Figure~\ref{fig:gammaist}.
The $\ze$-neighborhood of $\zg$ is shaded dark gray.  The rest of
Figure~\ref{fig:gammaist} will be explained in the next paragraph.

  \begin{figure}
\centerline{\includegraphics{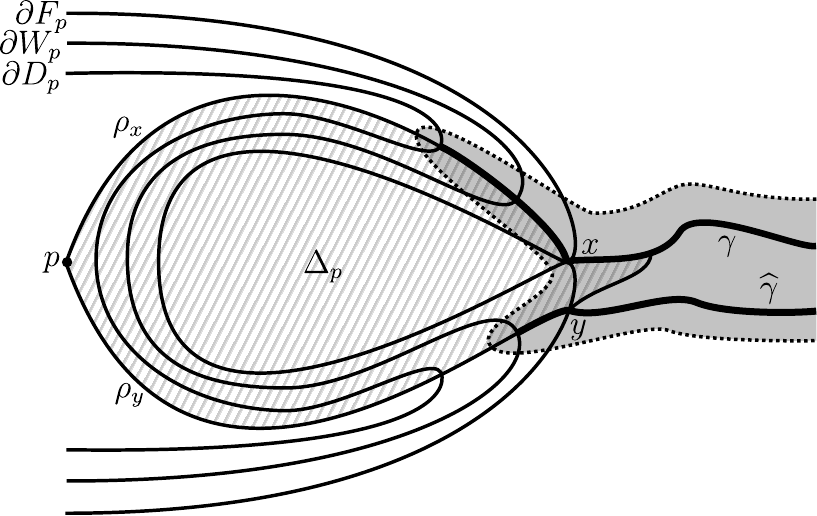}}
\caption{Constructing an isotopy which moves $\zg$ to $\widehat{\zg}$}
\label{fig:gammaist}
  \end{figure} 

With an eye toward Figure~\ref{fig:gammaist}, we proceed as follows.
We continue to assume that $p=p_i\in P_f^\infty $.  Let $x$ and $y$ be
the points at which $\zg$ and $\widehat{\zg}$ meet $\partial F_p$.
Let $\zr_x$ and $\zr_y$ be the rays in $\overline{F}_p$ which land at
$x$ and $y$ such that $\zg$ contains the subarc of $\zr_x$ which joins
$x$ and $\overline{D}_p$, while $\widehat{\zg}$ contains the subarc of
$\zr_y$ which joins $y$ and $W_p$.  The latter subarc is in the
boundary of a tile $t$ of $W$.  We are in the situation of
Lemma~\ref{lemma:canisotop} with $T_\ze=t$ and $y_\ze=y$.  Now we use
the fact that $\zg$ satisfies condition 5.  It follows that there
exists a closed topological disk $\zD_p\subseteq S^2$ whose interior
contains no element of $P_f$ and whose boundary is contained in the
union of $\zr_x$, $\zr_y$, $\zg$ and $t$.  Such a disk is shaded by
line segments in Figure~\ref{fig:gammaist}.

Now we choose an isotopy rel $P_f$ which moves $\zg$ to
$\widehat{\zg}$.  We choose it so that points in the middle section of
$\zg$ move within the smallest closed topological disk in $D_\zg$
which contains $W$.  At an end of $\zg$ near $p=p_i\in P_f^\infty $,
points of $\zg$ move within $\zD_p$.  At an end of $\zg$ near
$p=p_i\in P_f^0$, points of $\zg$ move within $B_{2\zd}(p)$.  We do
this so that if $p=p_i\in P_f^\infty $, then $\zr_x$ moves to $\zr_y$,
$D_p$ remains within $W_p$ and $W_p$ remains within $F_p$.  These
isotopies can be constructed compatibly with supports disjoint from
the arcs in $\cB$.  We obtain an isotopy rel $P_f$ which moves the
arcs in $\cB$ and $\cC$ into the 1-skeleton of $f^{-n}(S_\cR)$.

We finally extend this isotopy to the entire 1-skeleton of $S_\cR$.
Let $p\in P_f^\infty $.  Let $X$ be the intersection of
$\overline{F}_p$ with the union of the arcs in $\cB\cup \cC$.  We have
an isotopy which moves the arcs in $\cB$ and $\cC $ to arcs comprising
sets which we denote by $\cB'$ and $\cC'$.  It also moves the cell
complex $\overline{D}_p$ to a cell complex $\overline{D}'_p \subseteq
W_p$.  Let $X'$ be the intersection of $\overline{F}_p$ with the union
of the arcs in $\cB'\cup \cC'$.  We wish to construct an isotopy which
fixes $p$, $S^2\setminus F_p$ and $X'$ and which moves the 1-skeleton
of $\overline{D}'_p$ into the 1-skeleton of $W_p$.
Figure~\ref{fig:isotopd} shows what we have from the point of view of
$\overline{\bbD}$.  It shows a portion of $\zj_p^{-1}(\partial W_p)$
and a portion of the cell structure of $\zj_p^{-1}(\overline{D}'_p)$.
The cell structure of $\zj^{-1}_p(W_p)$ is not shown.  Suppose that
$\zj_p^{-1}(\zg_1)$ and $\zj^{-1}_p(\zg_2)$ meet $\bbD_\zr$ in points
$x_1$ and $x_2$.  By construction, $x_1$ and $x_2$ are not vertices of
$\bbD_\zr$.  Let $x'_1$ and $x'_2$ be the points in
$\zj^{-1}_p(\partial W_p)$ such that the isotopy which we have moves
$\zj_p(x_1)$ and $\zj_p(x_2)$ to $\zj_p(x'_1)$ and $\zj_p(x'_2)$.  So
$\zj_p(x'_1)$ and $\zj_p(x'_2)$ are endpoints of two arcs in $X'$.
Suppose that one of the two arcs $\za$ in
$\zj^{-1}_p(\overline{D}'_p)$ whose endpoints are $x'_1$ and $x'_2$
contains no other points of $\zj^{-1}_p(X')$.  Let $\za'$ be the
corresponding arc in $\zj^{-1}_p(\partial W_p)$ with endpoints $x'_1$
and $x'_2$.  We wish to construct an isotopy which moves $\za$ to
$\za'$, which fixes $x'_1$ and $x'_2$ and moves every vertex of
$\zj^{-1}_p(\overline{D}'_p)$ in $\za$ to a vertex of
$\zj^{-1}_p(W_p)$.  In order to do this, all that is needed is that
the number of vertices of $\zj^{-1}_p(W_p)$ in $\za'$ is at least as
large as the number of vertices of $\zj^{-1}_p(\overline{D}'_p)$ in
$\za$.  This is guaranteed by the fact that the subdivisions of
$\bbD_\zr$ were chosen so that $\za$ has $M$ vertices and the number
of vertices in $\za'$ is a positive multiple of $M$.  This allows us
to isotop the radii of $\overline{D}'_p$ into the radii of $W_p$ while
fixing $X'$ and $S^2\setminus F_p$.  It is easy to isotop the
equipotentials in $\overline{D}'_p$ to the equipotentials in $W_p$
because they are equal in number.

  \begin{figure}
\centerline{\includegraphics{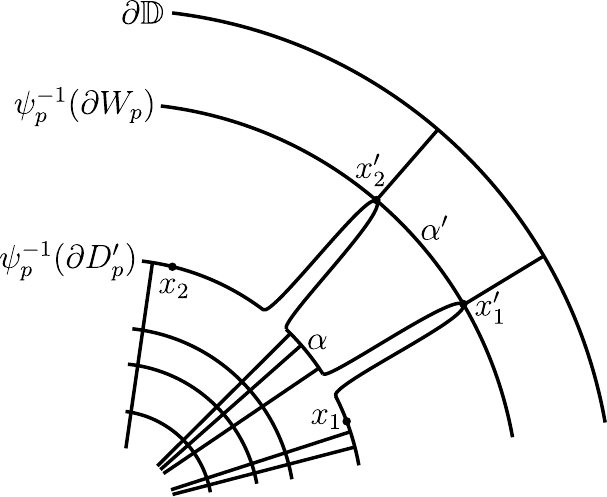}}
\caption{The picture in $\overline{\bbD}$ after moving the arcs in
$\cB$ and $\cC$ } \label{fig:isotopd}
  \end{figure}

Thus the isotopy rel $P_f$ which moves the arcs in $\cB$ and $\cC$
into the 1-skeleton of $f^{-n}(S_\cR)$ can be extended to an isotopy
rel $P_f$ which moves the 1-skeleton of $S_\cR$ into the 1-skeleton of
$f^{-n}(S_\cR)$.  This proves Theorem~\ref{thm:main}.

\end{proof}

\section{Realizability of $(2,2,2,2)$-Euclidean Thurston
maps}\label{sec:euclidean}\nosubsections

This section treats the case when the orbifold of $f$ has signature $(2,2,2,2)$, i.e. is isotopic to a map doubly-covered by an affine torus map. Not every such map $f$ has an iterate isotopic to the subdivision map of a finite subdivision rule; Theorem \ref{thm:euclid} gives a characterization in terms of the eigenvalues of the associated linear map.
 
Our nonrealizability results in this section and the next are
essentially based on the following statement.  If a Thurston map $f$
is the subdivision map of a finite subdivision rule, then the lift of
$f^{-1}$ to the universal covering orbifold of $f$ is combinatorially
distance nonincreasing.  This is formally stated in statement 1 of
Lemma 6.1 of \cite{exppropi}.  We use this result in this section, and
in the next section we use its generalization to the universal
covering space of the complement in $S^2$ of the postcritical set of
$f$.  We discuss this result in the next paragraph.

Let $f\co S^2\to S^2$ be a Thurston map which is the subdivision map
of a finite subdivision rule.  Let $P_f$ be the postcritical set of
$f$.  Let $D$ be either the universal covering orbifold of $f$ or the
universal covering space of $S^2-P_f$.  Let $\zp\co D\to S^2$ be the
associated branched covering map.  The multifunction $f^{-1}$ lifts to
a genuine function $F\co D\to D$, so that $f\circ \zp \circ F=\zp$.
Because $f$ is the subdivision map of a finite subdivision rule, there
exists a cell structure on $S^2$ and a refinement of it called its
first subdivision such that $f$ maps interiors of cells of the first
subdivision homeomorphically to interiors of cells of the initial cell
structure.  In general the original cell structure on $S^2$ does not
lift to $D$, but only because of the need for some vertices ``at
infinity''.  So we enlarge $D$ to a space $D^*$ by adding appropriate
vertices at infinity.  We then construct a cell structure on $D^*$ by
using $\zp$ to lift the initial cell structure on $S^2$.  As a result,
$\zp$ extends to a branched covering map $\zp\co D^*\to S^2$.  The
first subdivision of this cell structure on $D^*$ is the lift of the
first subdivision of the initial cell structure on $S^2$.  The map $F$
also extends to $D^*$.  It homeomorphically maps interiors of cells of
the initial cell structure on $D^*$ to interiors of cells of its first
subdivision.  A fat path in $D^*$ is the union of all tiles which meet
a given curve in $D$ (not $D^*$).  The length of a fat path is 1 less
than the number of these tiles.  The fat path distance function on $D$
(not $D^*$) is defined so that the ``distance'' between points $x,y\in
D$ is the minimum length of a fat path joining them.  (The fat path
function does not define a metric because points in the interior of a
tile have zero distance from each other, and it doesn't define a
pseudometric because a point in an edge or a vertex that is in more
than one tile has positive distance from itself.) Because $F$ maps
initial tiles of $D^*$ into initial tiles of $D^*$, it is distance
nonincreasing with respect to this fat path distance function.

The main result of \cite{lattes} is that almost every Latt\`{e}s
rational map is the subdivision map of a finite subdivision rule with
one tile type of a very special form.  The following theorem treats a
larger class of Thurston maps which need not be rational.  It almost completely determines which
$(2,2,2,2)$-Euclidean Thurston maps are subdivision maps of finite
subdivision rules.  In this case, the orbifold universal cover is
$\mathbb{R}^2$ and the map $F$ in the statement of the theorem is the
inverse of the map $F$ of the previous paragraph.

\begin{thm}\label{thm:euclid} 
Suppose $f$ is a Thurston map whose orbifold $\cO_f$ has signature $(2, 2, 2, 2)$. 
Furthermore, supposed $f$ is normalized so that it has a lift to the plane of the form $F(x)=Ax+b$, where $A$ is a $2\times 2$ matrix
of integers and $b$ is an integral linear combination of the columns of $A$. 
\begin{enumerate}
  \item If both eigenvalues of $A$ have absolute value greater
than 1, then every sufficiently large iterate of $f$ is the
subdivision map of a finite subdivision rule.
  \item If $\pm 1$ is an eigenvalue of $A$, then $f$ is the
subdivision map of a finite subdivision rule with one tile type.
  \item If $A$ has an eigenvalue with absolute value less than 1, then
no iterate of $f$ is Thurston equivalent to the subdivision map of a
finite subdivision rule.
\end{enumerate}
\end{thm}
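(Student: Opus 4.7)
The plan is to dispatch the three cases separately, all working upstairs on the orbifold universal cover $\mathbb{R}^2$ where $f$ lifts to the affine map $F(x)=Ax+b$ and the deck group is generated by translations by a lattice $\Lambda$ together with the half-turn $x\mapsto -x$, whose four fixed points modulo $\Lambda$ correspond to the four postcritical cone points. Parts (1) and (2) will be obtained by direct construction of a suitable cell decomposition of $\mathbb{R}^2$, while part (3) proceeds by contradiction using the fat path distance nonincreasing property recalled in the preamble to the theorem.

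For part (1), I would take a parallelogram $P\subset\mathbb{R}^2$ whose four vertices are cone points and which is a fundamental domain for the $\Lambda$-action. Since both eigenvalues of $A$ have absolute value strictly greater than $1$, for all sufficiently large $n$ the finer sublattice $A^{-n}\Lambda\supset \Lambda$ partitions $P$ into $|\det A|^n$ smaller parallelograms, each a fundamental domain for $A^{-n}\Lambda$. Because $-I$ commutes with $A^n$, this partition is invariant under the half-turn about the center of $P$, so it descends to a cell structure on $S^2$; the map $F^n$ sends each small parallelogram affinely onto a translate of $P$, giving the desired finite subdivision rule whose subdivision map is isotopic to $f^n$. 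For part (2), with $\pm 1$ an eigenvalue of $A$, I would align two sides of $P$ with the (rational) second eigenline of $A$. The integrality hypothesis on $b$ ensures that $F$ maps vertices of the lattice tiling by translates of $P$ to vertices of an invariant refinement, yielding a single-tile-type finite subdivision rule whose subdivision map is isotopic to $f$ itself.

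For part (3), suppose for contradiction that some iterate $f^k$ is Thurston equivalent to the subdivision map of a finite subdivision rule $\cR$. Then the associated cell decomposition of $S^2$ lifts to a cell decomposition of $\mathbb{R}^2$ with tiles of uniformly bounded Euclidean diameter and uniformly bounded combinatorial complexity, and the lift $\tilde{F}$ of $(f^k)^{-1}$ is combinatorially nonincreasing with respect to the fat path distance. But $\tilde{F}$ is an affine map of the form $x\mapsto A^{-k}x+c$ modulo the deck group, and the cocompactness of the quotient makes the fat path distance quasi-isometric to the Euclidean distance. If $A$ has an eigenvalue $\lambda$ with $|\lambda|<1$, then $A^{-k}$ stretches the corresponding eigenline by $|\lambda|^{-k}$, which for $k$ sufficiently large exceeds the multiplicative quasi-isometry constant; applying $\tilde{F}$ to a long enough segment in that eigendirection then strictly increases fat path distance, contradicting the nonincreasing property.

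The main obstacle I anticipate is the quasi-isometry estimate in part (3): one must keep careful track of how the fat path distance relates to Euclidean distance so that the multiplicative and additive errors of the quasi-isometry do not swamp the stretching of $A^{-k}$. The resolution is that these errors are bounded in terms of the finite number of tile types together with cocompactness of the deck action, while $|\lambda|^{-k}\to\infty$, and because we are free to replace the hypothetical iterate $k$ by any positive multiple, the stretching can be made arbitrarily large. In parts (1) and (2) the remaining technical point is verifying the central-symmetry invariance of the refinements used, which follows from $-I$ commuting with every power of $A$.
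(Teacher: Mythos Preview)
Your proposal for part (3) is close in spirit to the paper's argument, but parts (1) and (2) have genuine gaps, and part (3) has an imprecision worth flagging.

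\textbf{Part (1).} Your parallelogram construction does not work in general. For the pullback cell structure $(F^n)^{-1}(\text{tiling by }P)$ to \emph{refine} the original tiling by translates of $P$, the sides of $P$ must be parallel to their images under $A^{-n}$, i.e.\ parallel to eigenvectors of $A$. When $A$ has complex eigenvalues (e.g.\ $A=\left(\begin{smallmatrix}1 & -2\\ 1 & 1\end{smallmatrix}\right)$, eigenvalues $1\pm i\sqrt{2}$) there is no real eigenline at all, and no choice of $P$ makes the pulled-back grid contain the original grid. The paper sidesteps this entirely: it simply observes that the hypothesis of part (1) is exactly case (ii) of Theorem~\ref{thm:main}, so part (1) follows from that theorem.

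\textbf{Part (2).} Your sketch is too vague to evaluate, and the paper's construction is quite different. After conjugating so that $A=\left(\begin{smallmatrix}d & c\\ 0 & 1\end{smallmatrix}\right)$, the paper does \emph{not} use a parallelogram tiling. Instead it builds an $f$-invariant \emph{tree}: two horizontal arcs $\alpha,\beta$ (images of the horizontal edges of a fundamental parallelogram, which $F$ preserves since it fixes horizontal lines through lattice points) together with one transverse arc $\gamma$. The existence of a genuinely $f$-invariant $\gamma$ is obtained by the Lefschetz fixed point theorem applied to the degree-$d$ self-map of the core circle $\{y=1/2\}$. This yields a single-tile subdivision rule whose subdivision map is $f$ on the nose, not merely up to isotopy.

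\textbf{Part (3).} Your idea is the paper's, but two points need care. First, the subdivision map $g$ is only Thurston \emph{equivalent} to $f^k$, so the lift of $g^{-1}$ is not the affine map $x\mapsto A^{-k}x+c$; rather $G=F\circ H$ with $H$ a lift of a homeomorphism isotopic to the identity rel $P_f$, hence at bounded distance $J$ from the identity. Second, you cannot ``replace $k$ by a large multiple'' to beat the quasi-isometry constants, because $k$ is fixed by hypothesis and the constants depend on the (fixed) cell structure for that $k$. The correct move, which the paper carries out, is to iterate the single nonincreasing map $G^{-1}$: one shows $\|G^{-n}(x)-\lambda^{-n}x\|\le JK(\lambda^{-n}-1)/(\lambda^{-1}-1)$ by summing a geometric series, so for $x$ in the $\lambda$-eigenline with $\|x\|$ large enough the Euclidean distance $\|G^{-n}(x)-G^{-n}(0)\|$ still tends to infinity, contradicting the fat-path nonincreasing property combined with the uniform bound on tile diameters.
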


\noindent{\bf Remark:} Theorem 3.1 of \cite{lattes} implies that in the setting of Theorem \ref{thm:euclid}, if in addition $f$ is rational, then some iterate of $f$ is the subdivision map of a subdivision rule with one tile type. The proof given there does not use the fact that $f$ is rational, only that it is expanding.

  \begin{proof} We begin by making the connection between $A$ and $f$
precise.  Let $\zL$ be the sublattice of $\mathbb{Z}^2$ generated by
the columns of $A$.  Let $\zG$ be the group of all isometries of
$\mathbb{R}^2$ generated by rotations of order 2 about the elements of
$\zL$.  The map $F$ respects the action of $\zG$ on $\mathbb{R}^2$
and induces a map on $\mathbb{R}^2/\zG$ in a canonical way.  The
latter space is homeomorphic to $S^2$.  We conjugate this map on
$\mathbb{R}^2/\zG$ to $S^2$ by this homeomorphism.  The result is $f$.
The image of $\zL$ in $S^2$ is the postcritical set $P_f$ of $f$, and the
image of $\mathbb{Z}^2\setminus \zL$ in $S^2$ is the set of critical
points of $f$. Since the postcritical set is invariant under $f$, $f(P_f)=P_f$, so $F(\zL)\subset \zL$.  In particular $F(0)\in\zL$ and so $b \in \zL$. This verifies that the normalization condition can always be achieved.

Statement 1 follows from Theorem~\ref{thm:main}.

To prove statement 2, suppose that $\pm 1$ is an eigenvalue of $A$.
Replacing $A$ by $-A$ does not change $f$, so we assume that 1 is an
eigenvalue of $A$.  Let $d$ be the degree of $f$.  Since the product
of the eigenvalues of $A$ is $\det(A)=d$, the eigenvalues of $A$ are 1
and $d$.

We continue by finding a normal form for $A$.  Since the eigenvalues
of $A$ are rational numbers, $A$ has nonzero eigenvectors with
rational entries, hence with integer entries.  Let $(p,q)$ be an
eigenvector of $A$ with eigenvalue $d$ such that $p$ and $q$ are
relatively prime integers.  Then there exist integers $r$ and $s$ such
that $ps-qr=1$.  So $(p,q)$ and $(r,s)$ form a basis of
$\mathbb{Z}^2$.  Since $(p,q)$ is an eigenvector with eigenvalue
$d=\text{det}(A)$, when we conjugate $A$ to this basis, we obtain a
matrix of the form $\left[\begin{smallmatrix}d & c \\ 0 & 1
\end{smallmatrix}\right]$.  Since conjugation of $A$ by an element of
$\text{SL}(2,\mathbb{Z})$ corresponds to conjugation of $f$ by a
homeomorphism, we may assume that $A=\left[\begin{smallmatrix}d & c \\
0 & 1 \end{smallmatrix}\right]$.  (Conjugating this by a matrix of the
form $\left[\begin{smallmatrix}1 & a \\ 0 & 1
\end{smallmatrix}\right]$, we may even assume that $0\le c\le d-2$.)

Let $P$ be the parallelogram in $\mathbb{R}^2$ bounded by the lines
given by $y=0$, $y=1$, the 1-eigenspace of $A$ and the line parallel
to this through $(2d,0)$.  See Figure~\ref{fig:egvone}, where dots
mark elements of $\mathbb{Z}^2$ and large dots mark elements of $\zL$.
This parallelogram is a fundamental domain for $\zG$.

  \begin{figure}
\centerline{\includegraphics{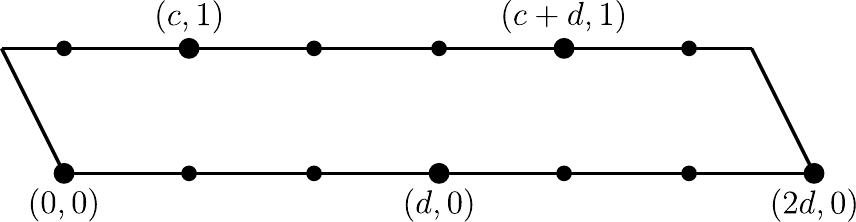}}  \caption{A fundamental
domain $P$ for $\zG$ in the situation of statement 2.}
\label{fig:egvone}
  \end{figure}

The image in $S^2$ of the bottom of $P$ is an arc $\za$ joining the
postcritical points which are the images of $(0,0)$ and $(d,0)$.
Similarly, the image in $S^2$ of the top of $P$ is an arc $\zb$
joining the postcritical points which are the images of $(c,1)$ and
$(c+d,1)$.  The map $F$ maps horizontal lines through elements of
$\mathbb{Z}^2$ to horizontal lines through elements of $\mathbb{Z}^2$.
Hence $f$ stabilizes the union $\za\cup \zb$.  The annulus in $S^2$
determined by $\za$ and $\zb$ is fibered by arcs which are the images
of the line segments in $P$ parallel to the left and right sides of
$P$.  The map $f$ stabilizes the set of these arcs.
The map $f$ also stabilizes
the simple closed curve in $S^2$ which is the image of the line in
$\mathbb{R}^2$ given by $y=1/2$.  The degree of $f$ on this simple
closed curve is $d>1$.  Hence the Lefschetz number of $f$ restricted
to this simple closed curve is not 0, and so the Lefschetz fixed point
theorem implies that $f$ fixes a point of this simple closed curve.
Thus there exists an arc $\zg$ joining $\za$ and $\zb$ which is
stabilized by $f$.  The union of the arcs $\za$, $\zb$ and $\zg$ is a
tree containing the postcritical points of $f$ which is stabilized by
$f$.  This tree provides $S^2$ with the structure of a CW complex
making $f$ the subdivision map of a finite subdivision rule with one
tile type.  This proves statement 2.

To prove statement 3, we argue by contradiction.  Since the hypotheses
are preserved under passing to an iterate, we may replace $f$ by an
iterate and suppose that $A$ has an eigenvalue $\zl$ less than 1 in
absolute value and that $f$ is Thurston equivalent to the subdivision
map $g\co S^2\to S^2$ of a finite subdivision rule.  Since the product
of the eigenvalues of $A$ is the degree of $f$, the eigenvalues of $A$
are real and the eigenvalue other than $\lambda$ has absolute value
greater than 1.  Since multiplying $A$ by $-1$ does not affect $f$, we
may furthermore assume that $\lambda>0$.

Because $f$ and $g$ are Thurston equivalent, there exist two
homeomorphisms $h,h'\co(S^2,P_f)\to(S^2,P_g)$ such that $fh = h'g$
with $h$ and $h'$ isotopic rel $P_f$.  By conjugating $g$, we may
assume that $h' = 1$, so that $f$ and $g$ have the same postcritical
set: $P_f=P_g$.  Then $h(P_f)=P_f$, and $h$ is isotopic to the
identity map rel $P_f$.  Just as $f$ lifts to $F\co \mathbb{R}^2\to
\mathbb{R}^2$ via the branched covering map from $\mathbb{R}^2$ to
$S^2$, the map $h$ has a lift $H\co \mathbb{R}^2\to \mathbb{R}^2$.
Because $h$ is isotopic to the identity map rel $P_f$, we may choose
$H$ so that its restriction to $\zL$ is the identity map.
Furthermore, because $S^2$ is compact, there exists a bound on the
distances which $H$ moves the points of $\mathbb{R}^2$.

The results of the previous paragraph imply that the map $G=F\circ H$
lifts $g$.  So $G^{-1}=H^{-1}\circ F^{-1}$ lifts the multifunction
$g^{-1}$.  Furthermore, there exists a positive real number $J$ such
that $\left\|G^{-1}(x)-F^{-1}(x)\right\|\le J$ for every $x\in
\mathbb{R}^2$.  We choose $J$ so large that we even have that
$\left\|G^{-1}(x)-A^{-1}x\right\|\le J$ for every $x\in \mathbb{R}^2$.

In this paragraph we prove that there exists a positive real number
$K$ such that $\left\|A^{-n}w\right\|\le K \zl^{-n}\left\|w\right\|$
for every positive integer $n$ and every $w\in \mathbb{R}^2$.  If $A$
is diagonal, then this is clear.  In general, $A$ can be conjugated by
an element of $\text{GL}(2,\bbR)$ to a diagonal matrix.  Since the
conjugating matrix deforms the standard metric on $\bbR^2$ by a
bounded amount, the desired result follows.

Now let $L$ be the $\lambda$-eigenspace of $A$.  We will obtain a
contradiction by considering the action of the iterates of $G^{-1}$ on
$L$.  Let $x\in L$.  Then $G^{-1}(x)=A^{-1}x+y_1=\lambda^{-1} x+y_1$
for some $y_1\in \mathbb{R}^2$ with $\left\|y_1\right\|\le J$.
Similarly, $G^{-2}(x)=\lambda^{-2}x+A^{-1}y_1+y_2$ for some $y_2\in
\mathbb{R}^2$ with $\left\|y_2\right\|\le J$.  Using the result of the
previous paragraph, we inductively see for every positive integer $n$
that
  \begin{equation*}
\begin{aligned}
\left\|G^{-n}(x)-\lambda^{-n}x\right\| & \le J K(\lambda^{-n+1}+\cdots
+\lambda^{-2}+\lambda^{-1}+1)\\
 & = JK(\lambda^{-n}-1)(\lambda^{-1}-1)^{-1}.
\end{aligned}
  \end{equation*}
Hence
  \begin{equation*}
\begin{aligned}
\lambda^{-n}\left\|x\right\| 
&=\left\|\lambda^{-n}x-\lambda^{-n}0\right\|\\
&=\left\|(\lambda^{-n}x-G^{-n}(x))+(G^{-n}(x)-G^{-n}(0))
+(G^{-n}(0)-\lambda^{-n}0)\right\|\\
 & \le
\left\|\lambda^{-n}x-G^{-n}(x)\right\|+\left\|G^{-n}(x)-
G^{-n}(0)\right\|+\left\|G^{-n}(0)-\lambda^{-n}0\right\|\\
 & \le 2JK(\lambda^{-n}-1)(\lambda^{-1}-1)^{-1}
+\left\|G^{-n}(x)-G^{-n}(0)\right\|,
\end{aligned}
  \end{equation*}
and so
  \begin{equation*}
\begin{aligned}
\left\|G^{-n}(x)-G^{-n}(0)\right\| & \ge
\lambda^{-n}\left\|x\right\|-2JK(\lambda^{-n}-1)(\lambda^{-1}-1)^{-1}\\
 & \ge(\left\|x\right\|-2JK(\lambda^{-1}-1)^{-1})\lambda^{-n}.
\end{aligned}
  \end{equation*}
Thus if $\left\|x\right\|>2JK(\lambda^{-1}-1)^{-1}$, then $\left\|
G^{-n}(x)-G^{-n}(0)\right\|$ tends to $\infty$ as $n$ tends to $\infty$.

We finally use the fact that $g$ is the subdivision map of a finite
subdivision rule.  Note that $G^{-1}$ is a lift of $g^{-1}$ to the
universal covering orbifold of $g$.  So the discussion at the
beginning of this section applies.  The initial tiling of $S^2$ lifts
to a tiling $T$ of $\mathbb{R}^2$.  Because $S^2$ is compact and this
branched covering map is regular, the tiles of $T$ decompose into
finitely many orbits under the action of its group of deck
transformations.  Because the elements of this group are Euclidean
isometries, there is thus a bound on the diameters of the tiles of
$T$.  So a bound on the fat path distance between two points of
$\mathbb{R}^2$ provides a bound on the Euclidean distance between
these points.  Hence the lack of a bound on the Euclidean distances
$\left\|G^{-n}(x)-G^{-n}(0)\right\|$ implies the lack of a bound on
the fat path distances between $G^{-n}(x)$ and $G^{-n}(0)$.  This
contradicts the observation made at the beginning of this section that
$G^{-1}$ is distance nonincreasing with respect to the fat path
distance function.

This proves Theorem~\ref{thm:euclid}.

\end{proof}

\section{Nonrealizability conditions}\label{sec:nofsr}\nosubsections

Suppose $f$ is a Thurston map with postcritical set $P_f$.  
We denote by $\cC$ the set of simple, unoriented, essential,
non-peripheral (not homotopic into arbitrarily small neighborhoods of
elements of $P_f$) curves in $S^2-P_f$, up to homotopy in
$S^2-P_f$. We denote by $\nonslope$ the union of homotopy classes of
inessential and peripheral curves in $S^2-P_f$. Via pullback, we
obtain a \emph{pullback relation} $\pullback$ on
$\cC\cup\{\nonslope\}$, where $\gamma \pullback \wtgamma$ if and only
if some component $\delta \subset f^{-1}(\gamma)$ is in $\wtgamma$.
It is natural to view this as a multivalued map of
$\cC\cup\{\nonslope\}$ to itself, and to consider its orbits; a point
has finite image under this map.  Note that the image of $\nonslope$
under this multivalued map consists of only $\nonslope$. An orbit
$\gamma_0\pullback\gamma_1\pullback\gamma_2\ldots$ is \emph{univalent}
if the unsigned degrees $\deg(f: \gamma_{i} \to \gamma_{i-1}), i=1, 2,
\ldots$ are all equal to one; it is \emph{wandering} if the
$\gamma_i$'s are all distinct and not equal to $\nonslope$.

Similarly, we denote by $\cA$ the set of homotopy classes of arcs $\alpha: [0,1] \to S^2$ with $\alpha(\{0,1\}) \subset P_f$, $\alpha(t) \in S^2-P_f$ for $t \not\in \{0,1\}$, up to homotopy through arcs with the same properties and reparameterizations which may reverse orientation, and subject to the condition that $\alpha$ is not peripheral in a similar sense.  Note that an element of $\cA$ might join a point of $P_f$ to itself.  We denote by $\nonarc$ the union of those homotopy classes of arcs $\alpha: [0,1] \to S^2$ defined similarly as for $\cA$, but with $P_f$ replaced by $f^{-1}(P_f)$, and subject to the condition that they do \emph{not} define elements of $\cA$. Again by pullback we obtain a relation $\cA \cup \{\nonarc\} \pullback \cA \cup \{\nonarc\}$ such that the corresponding multivalued map has finite images of points, and $\nonarc$ maps to itself. The corresponding degrees by which arcs map are always equal to one. Wandering is defined similarly.

Theorem~\ref{thm:cnds} provides conditions under which no iterate of a
given Thurston map is Thurston equivalent to the subdivision map of a
finite subdivision rule.  

\begin{thm}\label{thm:cnds} Let $f\co S^2\to S^2$ be a Thurston map
with postcritical set $P_f$.  
%
If the pullback relation on arcs has wandering orbits, or if the pullback relation on curves has wandering univalent orbits, then no iterate of $f$ is
Thurston equivalent to the subdivision map of a finite subdivision
rule.
\end{thm}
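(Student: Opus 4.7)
The plan is to argue by contradiction. Suppose some iterate $f^N$ of $f$ is Thurston equivalent to the subdivision map $g$ of a finite subdivision rule $\cR$. I first note that both hypotheses transfer to $g$: along any orbit under $f$, taking every $N$-th entry produces an orbit under $f^N$ in which distinctness is preserved, and degrees multiply, so wandering and univalent orbits survive iteration. The multivalued pullback maps on $\cA$ and $\cC\cup\{\nonslope\}$, together with their degrees, are invariants of Thurston equivalence (the conjugating isotopy rel $P_f$ intertwines them). Thus I may assume that $f$ itself is the subdivision map of a finite subdivision rule with cell structure $S$, and that $f$ has a wandering arc orbit $\alpha_0\pullback\alpha_1\pullback\cdots$ or a wandering univalent curve orbit $\gamma_0\pullback\gamma_1\pullback\cdots$.

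Next I would exploit the fat path machinery described in the opening of Section~\ref{sec:nofsr}, applied to the universal covering space $D$ of $S^2-P_f$ with its compactification $D^*$. The homeomorphism $F\co D^*\to D^*$ lifting the multifunction $f^{-1}$ sends each initial tile of $D^*$ into an initial tile, so the number of initial tiles met by $F(P)$ is bounded by the number met by $P$, for any compact path $P$. Define $n(\alpha)$ to be the minimum number of initial tiles of $D^*$ met by a lift of a representative of $\alpha$, and analogously $n(\gamma)$ to be the minimum number met by a fundamental segment $[x,\phi_\gamma(x)]$ of a lift $\tilde\gamma$, where $\phi_\gamma$ generates the stabilizer of $\tilde\gamma$ in the deck group. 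Essentiality and non-peripherality force $n(\gamma)\ge 1$.

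For the arcs case, starting with a lift $\tilde\alpha_0$ realizing $n(\alpha_0)=:L$, I would inductively pick lifts $F_i$ of $f^{-1}$ so that $\tilde\alpha_i:=F_i(\tilde\alpha_{i-1})$ is a lift of $\alpha_i$; this is possible because choosing $F_i$ among its deck translates selects among the components of $f^{-1}(\alpha_{i-1})$. The tile-nonincreasing property then gives $n(\alpha_i)\le L$ for all $i$. Projecting a minimizing lift to $S^2$, the representative arc traverses a sequence of at most $L$ tiles of $S$; since $S$ has only finitely many tiles there are only finitely many such sequences, and within each the closed-disk tiles admit only finitely many homotopy classes of arc rel endpoints. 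Hence $\{\alpha_i\}$ is a finite set, contradicting wandering. The curves case is parallel, with fundamental segments replacing full lifts.

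The main obstacle I expect is the curve case: verifying that $F$ carries a fundamental segment of $\tilde\gamma_{i-1}$ to a fundamental segment of $\tilde\gamma_i:=F(\tilde\gamma_{i-1})$. Since $F$ conjugates the deck group to itself by an automorphism $\Psi$, the element $\Psi(\phi_{\gamma_{i-1}})$ preserves $\tilde\gamma_i$, but a priori could be a proper power of the generator $\phi_{\gamma_i}$ of the stabilizer. The univalence hypothesis $\deg(f\co\gamma_i\to\gamma_{i-1})=1$ is precisely what forces this index to equal one, so that $\Psi(\phi_{\gamma_{i-1}})=\phi_{\gamma_i}^{\pm1}$ and $F$ indeed takes fundamental segments to fundamental segments. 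Granting this, $n(\gamma_i)\le n(\gamma_{i-1})$, and the finiteness count closes the curve case.
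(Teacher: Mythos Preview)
Your proof is correct and follows essentially the same approach as the paper: reduce to $f$ itself being a subdivision map, lift to the universal cover of $S^2\setminus P_f$, and use that the lift $F$ of $f^{-1}$ is combinatorially nonincreasing (the paper counts transverse crossings of the $1$-skeleton rather than tiles met, but the effect is the same) to confine all lifts to a finite subcomplex of $D^*$, whence only finitely many homotopy classes occur. Your explicit treatment of why univalence is needed in the curve case---so that $\Psi(\phi_{\gamma_{i-1}})$ generates the stabilizer of $\tilde\gamma_i$ and $F$ carries fundamental segments to fundamental segments---is more detailed than the paper's, which dismisses that case as ``only slightly different.''
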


\noindent{\bf Remark:} If $f$ is homotopic to a B\"ottcher expanding map, then (i) its pullback relation on arcs has no wandering orbits, and (ii) its pullback relation on curves has no univalent wandering orbits.  Compare \cite{P2}. For maps without periodic branch points, similar and stronger results are shown in \cite[Prop. 5, Thm. 8]{HP2}.  
Fix some length metric as in the definition of B\"ottcher expanding. 

To prove (ii), note that the length of any essential, simple, nonperipheral curve is bounded from below away from zero; that there are only finitely many such curves whose length is bounded above by some positive constant; and that if $f$ maps such a curve $\widetilde{\gamma}$ by degree $1$ to another such curve $\gamma$ by degree $1$, then the definition of B\"ottcher expanding implies $\ell(\tilde{\gamma}) < c \ell(\gamma)$ where $c<1$.

To prove (i), we argue as follows. For $p \in P_f^\infty$ and $0<\rho<1$ denote by $D_{p, \rho}$ the disk of radius $\rho$ about $p$ in the local B\"ottcher coordinates. Define the \emph{complexity} $C[\alpha]$ of a homotopy class of arcs $\alpha$ to be the infimum, among homotopic representatives, of the length of the portion of the subarc lying outside the disks $D_{p,1/2}$. There are only finitely many arc classes whose complexity is bounded above by some positive constant. B\"ottcher expansion  implies that if $f$ maps such an arc $\widetilde{\alpha}$ to such an arc $\alpha$, then $C[\widetilde{\alpha}]) < cC[\alpha]+\delta$ where $c<1$ is as in the previous paragraph and $\delta>0$ is the maximum, over such $p$, of the distance between $\partial D_{p,1/2}$ and $\partial D_{p, (1/2)^{1/\deg(f)}}$. The point is that under taking preimages, the middle of an arc shortens by a definite multiplicative amount, while the end segments lengthen by a bounded additive amount.  Combined, these two facts mean that there are no wandering arcs under iterated pullback. 

  \begin{proof} Suppose $\alpha_0 \pullback \alpha_1 \pullback \ldots$ is such an orbit. 
  
  Every iterate of $f$ satisfies the assumptions of the
theorem with the sequence $\za_0,\za_1,\za_2,\ldots$ replaced by a
subsequence.  Every Thurston map homotopic to $f$ satisfies the
assumptions of the theorem with $\za_0,\za_1,\za_2,\ldots$ modified by
homotopies rel $P_f$.  Thus to prove the theorem, it suffices to prove
that $f$ is not the subdivision map of a finite subdivision rule.

We prove this by contradiction.  Suppose that $f$ is the subdivision
map of a finite subdivision rule $\cR$.  We modify $\za_0$ by a
homotopy rel $P_f$ so that, except for its endpoints in the case of
arcs, for some nonnegative integer $n$ it meets the 1-skeleton of 
$\cR^0(S^2)$ transversely in $n$ points which are not vertices.  We
then modify $\za_1,\za_2,\ldots$ so as to preserve the assumptions of
the theorem.  Because $f$ is the subdivision map of $\cR$ and $f$ maps
$\za_i$ bijectively to $\za_{i-1}$, it follows that, except for
endpoints in the case of arcs, for every nonnegative integer $i$,
$\za_i$ meets the 1-skeleton of
$\cR^0(S^2)$ transversely in at most $n$ points which are not vertices.

Because $\za_0$, $\za_1$, $\za_2,\ldots$ are mutually not homotopic
rel $P_f$, the order of $P_f$ must be at least 4.  So the universal
covering space of $S^2\setminus P_f$ is the open disk $D$.  Let
$\zp\co D\to S^2\setminus P_f$ be the universal covering map.  As at
the beginning of Section~\ref{sec:euclidean}, we enlarge $D$ to a
space $D^*$ by adding vertices at infinity and extend $\zp$ to a
branched covering map $\zp\co D^*\to S^2$.  We use $\zp$ to lift the
cell structure of $S^2$ to $D^*$.

Let $F$ be a fundamental domain in $D^*$ for $\zp$ which is a union of
tiles of $D^*$.  For every nonnegative integer $i$, let
$\widetilde{\za}_i$ be a lift of $\za_i$ to $D^*$ which contains a
point in the interior of $F$.  Then each $\widetilde{\za}_i$ meets the
interior of $F$ and crosses at most $n$ edges of $D^*$.  Since there
is a positive integer $N$ such that every tile of $D^*$ has at most
$N$ vertices and edges, these lifts are contained in the union of
finitely many translates of $F$ under the group of deck
transformations of $\zp$.  In the case of arcs, it follows that
$\widetilde{\za}_i$ and $\widetilde{\za}_j$ have the same initial and
terminal endpoints for some indices $i\ne j$.  It follows that
$\widetilde{\za}_i$ and $\widetilde{\za}_j$ are homotopic rel
$\partial D^*$.  Hence $\za_i$ and $\za_j$ are homotopic rel $P_f$, a
contradiction.  The case of simple closed curves is only slightly
different.

This proves Theorem~\ref{thm:cnds}.

\end{proof}

Theorem~\ref{thm:euclid} shows that some Thurston maps with
Euclidean orbifolds are not subdivision maps of finite subdivision
rules.  We illustrate Theorem~\ref{thm:cnds} with the following
example of a Thurston map with hyperbolic orbifold such that no iterate
is Thurston equivalent to the subdivision map of a finite
subdivision rule.

\begin{ex}\label{ex:cubic} We begin by describing the subdivision map
of a finite subdivision rule on $S^2$.  The 1-skeleton of the initial
cell structure on $S^2$ is a square with four vertices and four edges.
There are two tiles, each a quadrilateral.  The subdivision map $g$, a
Thurston map, acts as in Figure~\ref{fig:cubic}.
Figure~\ref{fig:cubic} shows two copies of $S^2$, each with one point
at $\infty$.  The initial cell structure is shown in the right part of
Figure~\ref{fig:cubic} with the two tiles labeled $A$ and $B$.  The
map $g$ fixes all four initial vertices and the two horizontal edges
pointwise.  The tile labels in the left part indicate the initial
tiles which are the images of these tiles.  So $g$ has degree 3.  Its
local degree at every critical point is 2.  These four critical points
are all fixed by $g$, and so they form the postcritical set of $g$.

  \begin{figure}
\centerline{\includegraphics{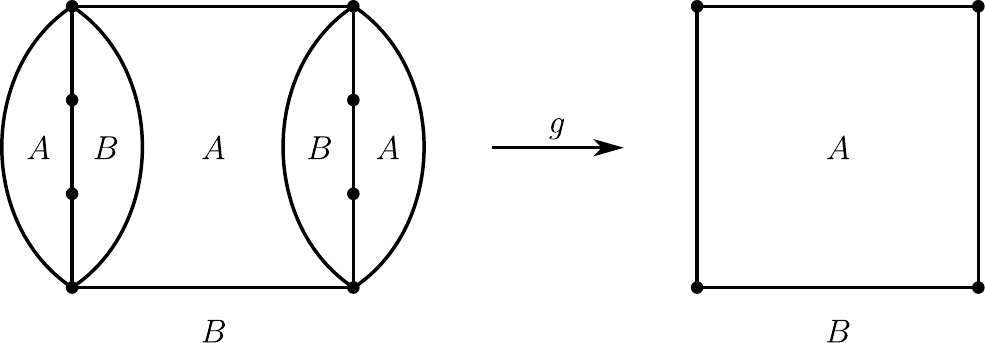}}  \caption{Defining the
Thurston map $g$ for Example~\ref{ex:cubic}.}
\label{fig:cubic}
  \end{figure}

Let $\zt$ be a Dehn twist, or even a half Dehn twist, about a simple
closed curve which is vertical relative to the orientation of
Figure~\ref{fig:cubic}.  Let $f=\zt^n\circ g$ for some nonzero integer
$n$.  Let $\za_0$ be the top edge of tile $A$.  We lift $\za_0$ via
the iterates of $f$ to obtain arcs $\za_1,\za_2,\za_3,\ldots$.  The
arc condition of Theorem~\ref{thm:cnds} is satisfied.  So no iterate
of $f$ is Thurston equivalent to the subdivision map of a finite
subdivision rule.

\end{ex}

The canonical decomposition of a Thurston map \cite[Thm. 10.2]{P2} provides another source of examples. 

\begin{ex} Suppose $f$ is an obstructed Thurston map with an elliptic piece in its decomposition that possesses at least four marked points. Then there exists a homeomorphism $h: (S^2, P_f) \to (S^2, P_f)$ such that the twist $h \circ f$ is not homotopic to the subdivision map of a finite subdivision rule.

For one may arrange so that $h$ is supported on the given elliptic piece $U$, and the first-return map of $h \circ f$ is pseudo-Anosov on $U$, so that every curve contained in this elliptic piece wanders under pullback of the restriction of $f$ to this piece. 

A bit more concretely: the mating of the $1/4$-rabbit quadratic polynomial with its complex conjugate provides an obstructed Thurston map $f_0$; its fourth iterate $f_1=f_0^4$ has the property that it is homotopic to a map which is the identity on a subsurface $U \subset S^2-P_{f_1}$ such that $U$ is a sphere with four holes and the boundary of each hole is essential and nonperipheral in $S^2-P_{f_1}$. Setting $f=h\circ f_1$ where $h: U \to U$ is pseudo-Anosov and $h|_{\partial U}=\id$ will realize this construction. 
\end{ex}

\end{document}